\title{Prime polynomials in short intervals and in arithmetic progressions}
\author{Efrat Bank
\thanks{School of Mathematical Sciences, Tel Aviv University, Ramat Aviv, Tel Aviv 69978, Israel, \href{mailto:efratban@post.tau.ac.il}{efratban@post.tau.ac.il}}
\and  Lior Bary-Soroker \thanks{School of Mathematical Sciences, Tel Aviv University, Ramat Aviv, Tel Aviv 69978, Israel, \href{mailto:barylior@post.tau.ac.il}{barylior@post.tau.ac.il}}
\and Lior Rosenzweig
\thanks{Department of Mathematics, KTH,
SE-10044, Stockholm, Sweden, \href{mailto:lior.rosenzweig@gmail.com}{lior.rosenzweig@gmail.com}}}
\newtheorem{theorem}{Theorem}[section]
\newtheorem{conjecture}[theorem]{Conjecture}
\newtheorem{lemma}[theorem]{Lemma}
\newtheorem{proposition}[theorem]{Proposition}
\newtheorem{corollary}[theorem]{Corollary}
\theoremstyle{definition}
\newcounter{case}
\renewcommand{\thecase}{\Roman{case}}
\newenvironment{case}[1]{\refstepcounter{case}\par\medskip\noindent \textsc{Case}~\thecase. \emph{#1}. \par}{\medskip}
\newcommand{\disc}{\mathop{\rm disc}}
\newcommand{\Spec}{\mathop{\rm Spec}}
\newcommand{\ZZ}{\mathbb{Z}}
\newcommand{\FF}{\mathbb{F}}
\newcommand{\Gal}{\mathop{\rm Gal}}
\begin{document}

\maketitle

\begin{abstract}
In this paper we establish function field versions of two classical conjectures on prime numbers. The first  says that the number of primes in intervals $(x,x+x^\epsilon]$ is about $x^\epsilon/\log x$. The second says that the number of primes $p<x$ in the arithmetic progression $p \equiv a \pmod d$, for $d<x^{1-\delta}$, is about $\frac{\pi(x)}{\phi(d)}$, where $\phi$ is the Euler totient function. 

More precisely, for short intervals we prove: 
Let $k$ be a fixed integer. Then 
\[
\pi_q(I(f,\epsilon)) \sim \frac{\#I(f,\epsilon)}{k}, \qquad q\to \infty
\] 
holds uniformly for all prime powers $q$, degree $k$ monic polynomials $f\in \FF_q[t]$ and $\epsilon_0(f,q) \leq \epsilon$, where $\epsilon_0$ is either $\frac{1}{k}$, or $\frac{2}{k}$ if  $p\mid k(k-1)$, or $\frac{3}{k}$ if further $p=2$ and $\deg f'\leq 1$. Here $I(f,\epsilon)= \{g\in \FF_q[t] \mid \deg(f-g) \leq \epsilon \deg f\}$, and $\pi_q(I(f,\epsilon))$ denotes the number of prime polynomials in $I(f,\epsilon)$. 
We show that this estimation fails in the neglected cases. 

For arithmetic progressions we prove: Let $k$ be a fixed integer. Then 
\[
\pi_q(k;D,f) \sim \frac{\pi_q(k)}{\phi(D)}, \qquad q\to \infty,
\]
holds uniformly for all relatively prime polynomials $D,f\in \FF_q[t]$ satisfying $\|D\| \leq q^{k(1-\delta_0)}$, where $\delta_0$ is either $\frac{3}{k}$ or $\frac{4}{k}$  if  $p=2$ and $(f/D)'$ is a constant. Here $\pi_q(k)$ is the number of degree $k$ prime polynomials and $\pi_q(k;D,f)$ is the number of such polynomials in the arithmetic progression $P\equiv f\pmod D$.

We also generalize these results to arbitrary factorization types.
\end{abstract}

\section{Introduction}
We study two function field analogues of two classical problems in number theory concerning the number of primes in short intervals and in arithmetic progressions. We first introduce the classical problems. In the next sections we formulate the corresponding function field conjectures and the resolution of them in the limit $q\to \infty$. 

\subsection{Primes in short intervals}
Let $\pi(x)=\#\{ 0<p\leq x\mid p\mbox{ is a prime}\}$ be the prime counting function. By the Prime Number Theorem  (PNT)
\[
\pi(x) \sim \frac{x}{\log x}, \quad x\to \infty.
\]
Therefore, one may expect that an interval $I=(x,x+\Phi(x)]$ of size $\Phi(x)$ starting at a large $x$ contains about $\Phi(x)/\log x$ primes, i.e.\
\begin{equation}\label{eq:primes_short}
\pi(I) := \pi(x+\Phi(x)) - \pi(x) \sim \frac{\Phi(x)}{\log x}.
\end{equation}
From PNT \eqref{eq:primes_short} holds for $\Phi(x) \sim c x$, for any fixed $0<c<1$. By Riemann Hypothesis \eqref{eq:primes_short} holds for $\Phi(x) \sim  \sqrt {x} \log x$ or even $\Phi(x) \sim \epsilon \sqrt {x \log x}$ assuming a strong form of Montgomery's pair correlation conjecture  \cite{HeathBrownGoldston1984}. Concerning smaller powers of $x$ Granville conjectures \cite[p.~7]{Granville1995} 
\begin{conjecture}\label{conj:si}
If $\Phi(x)>x^\epsilon$ then \eqref{eq:primes_short} holds. 
\end{conjecture}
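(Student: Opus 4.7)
The plan is to attack Conjecture~\ref{conj:si} via the Riemann explicit formula for Chebyshev's function $\psi(x)=\sum_{n\le x}\Lambda(n)$. For $h=\Phi(x)$ one has
\[
\psi(x+h)-\psi(x) \;=\; h \;-\; \sum_\rho \frac{(x+h)^\rho-x^\rho}{\rho} \;+\; O(\log x),
\]
where $\rho=\beta+i\gamma$ ranges over non-trivial zeros of $\zeta(s)$. After partial summation from $\psi$ to $\pi$, the asymptotic \eqref{eq:primes_short} is equivalent to showing that the zero sum is $o(h)$. First I would split the zeros according to $\beta$, use the bound $|(x+h)^\rho-x^\rho|\ll h\,x^{\beta-1}$, and insert a zero-density estimate $N(\sigma,T)\ll T^{A(\sigma)(1-\sigma)}(\log T)^{c}$ of Ingham--Huxley type. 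Optimising $A(\sigma)$ gives \eqref{eq:primes_short} unconditionally for $\Phi(x)>x^{7/12+\epsilon}$, and under the Riemann Hypothesis (so $\beta=\tfrac12$ throughout) down to $\Phi(x)\gg\sqrt{x}\log x$.

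To push below the $\sqrt{x}$ barrier I would invoke Montgomery's pair correlation conjecture. Under RH, Taylor expansion reduces the zero sum to $hx^{-1/2}\sum_{|\gamma|\le x/h}e^{i\gamma\log x}/(\tfrac12+i\gamma)$ modulo controllable error. Pair correlation predicts enough cancellation in this exponential sum to yield, via the Goldston--Gonek second-moment machinery, the asymptotic down to $\Phi(x)\gg \epsilon\sqrt{x\log x}$, as in \cite{HeathBrownGoldston1984}. To reach the full range $h=x^\epsilon$ I would complement this with higher $n$-level correlation conjectures \`a la Rudnick--Sarnak, which model the zeros as a GUE spectrum and heuristically control $\sum_{\gamma}e^{i\gamma u}$ at every scale $u\asymp h/x$, so covering every positive $\epsilon$.

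The hard part is that none of these conjectures is known, and even taken together they yield asymptotics only in mean square or on a density-one exceptional set. To remove the exceptional set I would attempt a hybrid strategy: use the Matom\"aki--Radziwi\l\l{} bilinear factorisation of $\Lambda$ to obtain almost-all short-interval statements, couple this with a Goldston--Montgomery variance bound $\int_X^{2X}|\psi(x+h)-\psi(x)-h|^{2}\,dx\ll Xh$, and finally upgrade almost-all to pointwise via a Bombieri--Vinogradov-type input in short intervals or a dispersion argument. This pointwise upgrade is the main obstacle: it must be uniform in $\epsilon$, and no tool currently available is strong enough to deliver such uniformity below the $\sqrt{x}$ scale without a fundamentally new zero-free region or a new source of cancellation beyond the explicit formula. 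This is precisely why Conjecture~\ref{conj:si} remains open, and it motivates the function-field route taken in the body of the present paper.
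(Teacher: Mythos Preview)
This statement is a \emph{conjecture}, not a theorem: the paper does not prove it and explicitly quotes Granville to the effect that no known approach can reach intervals of length $\sqrt{x}$, let alone $x^\epsilon$ for small $\epsilon$. So there is no ``paper's own proof'' to compare against.

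Your proposal is, correspondingly, not a proof but a survey of partial and conditional results (zero-density estimates giving $x^{7/12+\epsilon}$, RH giving $\sqrt{x}\log x$, pair correlation giving $\epsilon\sqrt{x\log x}$), followed by an honest admission in the last paragraph that the pointwise upgrade below $\sqrt{x}$ is out of reach. That final admission is the correct conclusion, but it means the preceding outline does not constitute a proof of Conjecture~\ref{conj:si}: every step beyond $x^{7/12}$ relies on unproved hypotheses (RH, pair correlation, $n$-level correlations), and even granting all of them the almost-all to pointwise step has no known mechanism. The suggestion to combine Matom\"aki--Radziwi\l\l, Goldston--Montgomery variance, and a short-interval Bombieri--Vinogradov is not a viable plan either, since those tools are intrinsically average-type and do not produce pointwise statements without an input at least as strong as what you are trying to prove.

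In short: there is no gap to fix because there is no proof on either side; the statement is open, and your write-up should be framed as motivation or context rather than as a proof attempt.
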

But even for $\Phi(x)=\sqrt{x}$ Granville says  \cite[p.~73]{Granville2010}: 
\begin{quote}
\emph{we know of no approach to prove that there are primes in all intervals $[x, x + \sqrt x]$.} 
\end{quote}

Heath-Brown \cite{HeathBrown1988Crelle}, improving Huxley \cite{Huxley1972inv}, proves Conjecture~\ref{conj:si} unconditionally, for $x^{\frac{7}{12}- \epsilon(x)}\leq \Phi(x)  \leq \frac{x}{\log^4x}$, where $\epsilon (x) \to 0$. 

We note that for extremely short intervals (e.g., for $\Phi(x) = \log x \frac{\log\log x \log\log\log\log x}{\log\log\log x}$)  \eqref{eq:primes_short}  fails \cite{Rankin1938} uniformly, but may hold for almost all $x$, see \cite{Selberg1943} and the survey \cite[Section~4]{Granville2010}.

\subsection{Primes in arithmetic progressions}
Let $\pi(x;d,a)$ denote the number of primes $p\leq x$ such that $p\equiv a\pmod d$. The Prime Number Theorem for arithmetic progressions says that if $a$ and $d$ are relatively prime and fixed, then 
\begin{equation}\label{eq:primesinAP}
\pi(x;d,a) \sim \frac{\pi(x)}{\phi(d)}, \quad x\to \infty,
\end{equation}
where $\pi(x)$ is the prime counting function and $\phi(d)$ is the Euler totient function, giving the number of positive integers $i$ up to $d$ with $\gcd(i,d)=1$.  

In many applications it is crucial to allow the modulus $d$ to grow with $x$. The interesting range is $d<x$ since if $d\geq x$,  there can be at most one prime in the arithmetic progression $p\equiv i\pmod d$. A classical conjecture is the following (for a slightly different form see \cite[Conjecture~13.9]{MontgomeryVaughan}).

\begin{conjecture}\label{conj:AP}
For every $\delta  >0$, \eqref{eq:primesinAP} holds in the range $d^{1+\delta}<x$. 
\end{conjecture}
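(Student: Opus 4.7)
The plan is to attack Conjecture~\ref{conj:AP} directly via the explicit formula for Dirichlet $L$-functions combined with the sharpest available zero-density estimates, and to flag where the argument runs out of steam with current technology.

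First I would pass to the weighted count
\[
\psi(x;d,a)=\sum_{n\leq x,\ n\equiv a\,(d)}\Lambda(n),
\]
since partial summation converts the conjectured asymptotic for $\pi(x;d,a)$ into the equivalent statement $\psi(x;d,a)\sim x/\phi(d)$ in the range $d\leq x^{1-\delta}$. Character orthogonality modulo $d$ then reduces everything to bounding the twisted sums $\psi(x,\chi)=\sum_{n\leq x}\chi(n)\Lambda(n)$ for all non-principal $\chi\pmod d$: the principal character furnishes the main term $x/\phi(d)$, and the required conclusion becomes $\psi(x,\chi)=o(x)$ uniformly in $\chi$ and in $d\leq x^{1-\delta}$.

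Next I would invoke the explicit formula $\psi(x,\chi)=-\sum_{\rho}x^\rho/\rho+O(\log^2(dx))$, with $\rho$ running over the non-trivial zeros of $L(s,\chi)$, and attempt to control the right-hand side via zero-density estimates of the form $\sum_{\chi\pmod d}N(\sigma,T,\chi)\ll(dT)^{A(1-\sigma)+\epsilon}$ in the tradition of Montgomery, Huxley, Jutila and Heath-Brown. For the target range $d\leq x^{1-\delta}$ one needs such a bound with $A(1-\sigma)<1$ for $\sigma$ arbitrarily close to $1/2$, and one must simultaneously rule out an exceptional Siegel zero $\beta$ of $\prod_\chi L(s,\chi)$, whose contribution $-x^\beta/(\beta\phi(d))$ would otherwise spoil the asymptotic when $\beta$ lies too close to $1$ relative to $x$ and $d$.

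The main obstacle, and where I expect the plan to stall, is the square-root barrier for an \emph{individual} modulus: even under GRH one recovers \eqref{eq:primesinAP} only for $d\leq x^{1/2}/\log^{2+\epsilon}x$, while unconditional Siegel--Walfisz-type bounds for a single modulus reach only $d\leq(\log x)^A$; the Bombieri--Vinogradov theorem and its refinements go beyond $x^{1/2}$ only on average over $d$. To handle $d\leq x^{1-\delta}$ pointwise one would need a log-free density estimate with exponent $A<2$ valid uniformly down to $\sigma$ close to $1/2$ for a single modulus, together with an effective, unconditional exclusion of Siegel zeros in that range. Both are central open problems, and any proof of Conjecture~\ref{conj:AP} in its full strength must break precisely this barrier.
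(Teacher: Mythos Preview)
The statement you are attempting to prove is a \emph{conjecture}, and the paper does not prove it. The paper explicitly presents Conjecture~\ref{conj:AP} as open (quoting Granville on how far current results lie from the goal) and instead establishes a function field analogue, Theorem~\ref{thm:main2}, where $\FF_q[t]$ replaces $\ZZ$ and the asymptotic is taken as $q\to\infty$ with $k$ fixed. That theorem is proved by an entirely different mechanism: one writes the arithmetic progression as specializations of a generic polynomial $\mathcal{F}(\mathbf{A},t)=f+D(t^{m+1}+\sum A_it^i)$, computes $\Gal(\mathcal{F},\FF(\mathbf{A}))=S_k$ via geometric arguments (double transitivity plus a transposition from ramification), and then counts irreducible specializations using Lang--Weil. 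No $L$-functions, explicit formulae, or zero-density estimates enter.

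Your write-up is not a proof but a diagnosis: you correctly trace the reduction to $\psi(x,\chi)=o(x)$ uniformly for $d\leq x^{1-\delta}$, and you correctly identify that even GRH gives only $d\leq x^{1/2-\epsilon}$ for an individual modulus, so the required log-free density estimate with exponent $A<2$ down to $\sigma$ near $1/2$, together with unconditional exclusion of Siegel zeros, is beyond current technology. That analysis is accurate, but it means your proposal terminates in an acknowledged gap rather than a proof. There is nothing to compare against in the paper, because the paper makes no claim to have proved this statement over $\ZZ$.
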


Concerning results on this conjecture Granville says  \cite[p.~69]{Granville2010}:
\begin{quote}
\it $\ldots$ the best proven results have $x$ bigger than the exponential of a power of $q$ \emph{(Granville's $q$ is our $d$)} far larger than what we expect. If we are prepared to assume the unproven Generalized Riemann Hypothesis we do much better, being able to prove that the primes up to $q^{2+\delta}$ are equally distributed amongst the arithmetic progressions $\mod q$, for $q$ sufficiently large, though notice that this is still somewhat larger than what we expect to be true.
\end{quote}

In this work we establish function field analogues of Conjectures~\ref{conj:si} and \ref{conj:AP} for certain intervals of parameters $\epsilon,\delta$ which may be arbitrary small, and in particular breaking the barriers $\epsilon = 1/2$ in the former and $\delta =1$ in the latter. This indicates that Conjectures~\ref{conj:si} and \ref{conj:AP} should hold. A crucial ingredient is a type of Hilbert's irreducibility theorem over finite fields \cite{BarySoroker2012}.

\section{Function fields}
Let $\mathcal{P}_{\leq k}$ be the space of polynomials of degree at most $k$ over $\FF_q$ and $\mathcal{M}(k,q)\subseteq \mathcal{P}_{\leq k}$ the subset of monic polynomials of degree $k$. If $\deg f=k$, we let $\|f\| = q^{k}$. 

\subsection{Short intervals}
Let $\pi_q(k) = \# \{g\in M(k,q) \mid g \mbox{ is a prime polynomial}\}$ be the prime polynomial counting function. The Prime Polynomial Theorem (PPT) asserts that 
\[
\pi_q(k) = \frac{q^k}{k} + O\bigg(\frac{q^{k/2}}{k}\bigg).
\]

We replace the interval $[x,x+x^\epsilon)$ around $x$ with the interval $I$ around  $f\in \mathcal{M}(k,q)$ given by 
\[
I=I(f,\epsilon) =\{g\in\FF_q[t]\mid \|f-g\| \leq \|f\|^\epsilon\} = f + \mathcal{P}_{\leq m},
\]
where $m=\lfloor \epsilon\deg(f)\rfloor$. From this it is clear that it suffices to consider only $\epsilon = \frac{m}{\deg f}$, where $m$ is a nonnegative integer. 
If $\epsilon\geq1$, then $I(f,\epsilon)=\mathcal{P}_{\leq m}$, and so the PPT gives the number of primes there. Therefore, the interesting range is $\epsilon <1$, in which case we say that $I$ is a \emph{short interval around $f$}. In particular, $\mathcal{M}(k,q)=I(t^k,\frac{k-1}{k})$ is a short interval.
We note that all the polynomials in a short interval around a monic polynomial are monic.

For a short interval $I$ let $\pi_q(I) = \# \{g\in I \mid g \mbox{ is a prime polynomial}\}$. 
The naive analogue of Conjecture \ref{conj:si} says that $\pi_q(I(f,\epsilon))\sim \#I(f,\epsilon)/\deg f$ when $q^{\deg(f)}$ is large. However some anomalies can occur when both $\epsilon$ and $\deg f$ are small. For example if $\epsilon < \frac{1}{\deg f}$ this naive approximation fails, see Section~\ref{sec:pmidk}. Thus in the function field conjecture we add the assumption that $\epsilon$ is not too small when $\deg f$ is small:

\begin{conjecture}\label{conj:ffsi}
There exists a function $\epsilon_0(f,q)>0$ defined on $f\in \FF_q[t]$ such that $\displaystyle\lim_{\deg f\to \infty} \epsilon_0(f,q)=0$ and such that for any fixed $\epsilon$ the asymptotic formula 
\[
\pi_q(I(f,\epsilon)) \sim \frac{\# I(f,\epsilon)}{\deg(f)}, \qquad q^{\deg(f)}\to \infty,
\]
holds uniformly for all $q$, $f\in \FF_q[t]$ monic with $\epsilon_0(f,q) \leq \epsilon<1$. 
\end{conjecture}

\subsection{Primes in arithmetic progressions}
For relatively prime $f,D\in \FF_q[t]$ let \
\[
\pi_q(k;D ,f)=\#\{h=f+Dg\in \mathcal{M}(k,q)\mid h\mbox{ is a prime polynomial}\}.
\]
The Prime Polynomial Theorem for arithmetic progressions says that 
\begin{equation}\label{eq:PPTAP}
\pi_q(k;D ,f) = \frac{\pi_q(k)}{\phi(D)} + O\bigg(\frac{q^{k/2}}{k}\deg D\bigg).
\end{equation}
Here $\phi(D)$ is the function field Euler totient function, giving the number of units in $\FF_q[t]/D\FF_q[t]$. 

As in the classical case, we want to allow $\deg D$ to grow with $k$. The interesting range of parameters is $\deg D<k$ because if $\deg D\geq k$,  there is at most one monic prime in the arithmetic progression $h\equiv f\mod D$ of degree $k$. As in the short interval case, we must restrict the range  $\delta$ when $k$ is small. 

\begin{conjecture}\label{conj:ffAP}
There exists a function $\delta_0(f,D,q,k)$ defined over relatively prime $f,D\in \FF_q[t]$ such that $\displaystyle\lim_{k\to \infty}\delta_0(f,D,q,k) =0$  and such that for any fixed $\delta$ the asymptotic formula 
\[
\pi_q(k;D,f)\sim \frac{\pi_q(k)}{\phi(D)}, \qquad q^{k} \to \infty,
\]
holds uniformly for all $q$ and relatively prime polynomials $f,D\in \FF_q[t]$ satisfying $\deg D \leq k(1-\delta_0(f,D,q,k))$.
\end{conjecture}

(We replaced the range $d^{1+\delta}<x$ as in Conjecture~\ref{conj:AP} with $d<x^{1-\delta}$, for technical reasons.)

We note that
\[
\phi(D) \sim q^{\deg D}, \quad q\to \infty.
\]
Therefore, if $\deg D < \frac{k}{2}$, then \eqref{eq:PPTAP} gives that 
\[
\pi_q(k;D,f) \sim \frac{\pi_q(k)}{\phi(D)}, \quad q\to \infty.
\]
This range corresponds to $\delta>\frac{1}{2}$ in Conjecture~\ref{conj:ffAP}.
On the other hand \eqref{eq:PPTAP} gives nothing when $\delta\leq \frac{1}{2}$.

Partial results towards Conjectures~\ref{conj:ffsi} and \ref{conj:ffAP} in the limit $q\to \infty$ can be deduced from work of Cohen  \cite{Cohen1972} when the  characteristic of $\FF_q$ is greater than $\deg F$ and from the work of Keating and Rudnick \cite{KeatingRudnick2012} in an almost everywhere sense. 

We prove these conjectures in the limit $q\to \infty$ in general. 

\subsection{Results}
We settle both Conjectures~\ref{conj:ffsi} and \ref{conj:ffAP} in the limit $q\to \infty$. In fact, our method allows us to count polynomials with any given factorization type. Let us start by setting up the notation.

The degrees of the primes in the factorization of a polynomial $f\in \FF_q[t]$ to a product of prime polynomials gives a partition of $\deg f$, denoted by $\lambda_f$. Similarly, the lengths of the cycles in the factorization of a permutation $\sigma\in S_k$ to a product of disjoint cycles gives a partition of $k$, denoted by $\lambda_\sigma$. For a partition $\lambda$ of $k$ we denote the probability for $\sigma\in S_k$ to have $\lambda_\sigma=\lambda$ by 
\begin{equation}\label{eq:cycle_type}
P(\lambda)=\frac{\#\{ \sigma\in S_k\mid \lambda_\sigma = \lambda\}}{k!}.
\end{equation}

Let $k$ be a positive integer and $\lambda$ a partition of $k$. For a short interval $I$ around $f\in \mathcal{M}(k,q)$ we define the counting function
\[
\pi_q(I;\lambda) = \# \{g\in I \mid  \lambda_g=\lambda \}.
\]

\begin{theorem}\label{thm:mainpart}
Let $k$ be a positive integer. Then there exists a constant $c(k)>0$ depending only on $k$ such that for any 
\begin{itemize}
\item 
partition $\lambda$ of $k$,
\item
prime power $q=p^\nu$,
\item
short interval $I=f+\mathcal{P}_{\leq m}$, where $f\in \mathcal{M}(k,q)$ and $3\leq m<k$
\end{itemize}
we have
\[
\left|\pi_q(I;\lambda) - P(\lambda)q^{m+1}\right| \leq c(k)q^{m+\frac{1}{2}}
.
\]
We may take $m=1$ if $p\nmid k(k-1)$ and $m=2$ if $p\neq 2$ or if $\deg f'>1$.
\end{theorem}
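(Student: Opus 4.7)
The plan is to run the same Galois-theoretic argument that underlies Theorem~\ref{thm:main}, but to replace the weight attached to the conjugacy class of $k$-cycles by the weight attached to the conjugacy class of permutations of cycle type $\lambda$. First I would set up the generic family
\[
F(t,a_0,\ldots,a_m) = f(t) + a_m t^m + \cdots + a_1 t + a_0,
\]
viewed as a polynomial in $t$ over the function field $K=\FF_q(a_0,\ldots,a_m)$. The heart of the proof of Theorem~\ref{thm:main} establishes, under the stated hypotheses on $m$, $p$, $k$ and $\deg f'$, that the geometric Galois group of $F$ over $K$ is the full symmetric group $S_k$ and that $\FF_q$ is algebraically closed in the splitting field. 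This is where the Hilbert-irreducibility-type input of \cite{BarySoroker2012} enters and where the restrictions on $(m,p,k,\deg f')$ are actually used; I would take this computation as a black box inherited from the proof of Theorem~\ref{thm:main}.

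Next, for each specialization $a\in\FF_q^{m+1}$ yielding a separable polynomial $g_a = F(t,a) = f+h$, the factorization type $\lambda_{g_a}$ coincides with the cycle type of the Frobenius conjugacy class $\mathrm{Frob}_a$ acting on the roots of $g_a$. The set of inseparable or otherwise degenerate specializations has cardinality $O_k(q^m)$, which will be absorbed into the error. Consequently,
\[
\pi_q(I;\lambda) = \#\{a\in\FF_q^{m+1}\mid \mathrm{Frob}_a\in C_\lambda\} + O_k(q^m),
\]
where $C_\lambda\subset S_k$ denotes the conjugacy class of permutations of cycle type $\lambda$.

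Finally I would apply an explicit Chebotarev density estimate for the $S_k$-cover of affine $(m+1)$-space over $\FF_q$ defined by $F$. By standard function-field Chebotarev (Deligne's Riemann Hypothesis applied to the relevant Artin $L$-functions, or equivalently Lang--Weil estimates on the associated conjugacy-class varieties), the right-hand side equals
\[
\frac{|C_\lambda|}{|S_k|}\,q^{m+1} + O_k\bigl(q^{m+1/2}\bigr) = P(\lambda)\,q^{m+1} + O_k\bigl(q^{m+1/2}\bigr),
\]
with implied constant depending only on $k$ through the degree and genus data of the cover (both of which are bounded in terms of $k$ alone). Combining the two displays yields the theorem.

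The main obstacle is of course the Galois computation of the first paragraph, but this is precisely the technical core already delivered by the proof of Theorem~\ref{thm:main}; once it is in hand, the passage from the special class of $k$-cycles (giving $P((k))=1/k$ and recovering Theorem~\ref{thm:main}) to an arbitrary partition $\lambda$ (giving $P(\lambda)$) is purely a matter of changing the conjugacy-class weight in the Chebotarev sum. In particular, the restrictions on $m$, $p$ and $\deg f'$ carry over from Theorem~\ref{thm:main} to Theorem~\ref{thm:mainpart} verbatim.
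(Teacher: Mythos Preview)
Your proposal is correct and follows essentially the same route as the paper: set up the generic family $\mathcal{F}=f+\sum_{i=0}^m A_it^i$, invoke the Galois computation (Proposition~\ref{prop:Galoisgroup}) to get $\Gal(\mathcal{F},\FF(\mathbf{A}))=S_k$, and then apply the Lang--Weil/Chebotarev-type estimate (Proposition~\ref{prop:irrsub}) for the conjugacy class $C_\lambda$. The only organizational wrinkle is that in the paper the logical dependence runs the other way---Theorem~\ref{thm:main} is deduced as the special case $\lambda=(k)$ of Theorem~\ref{thm:mainpart}, so the Galois computation is packaged as a standalone proposition rather than ``inherited'' from Theorem~\ref{thm:main}---but the mathematical content is identical.
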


For $q=p^{\nu}$ and $f\in \mathcal{M}(k,q)$, set
\[
\epsilon_0 (f,q)= \begin{cases} 
\frac{3}{k},& \mbox{ if $p=2$ and $\deg f'\leq 1$,}\\ 
\frac{1}{k}, & \mbox{ if $p\nmid k(k-1)$,}\\
\frac{2}{k}, & \mbox{ $p\mid k(k-1)$ .}
\end{cases}
\]
%
Applying Theorem~\ref{thm:mainpart} with the partition $\lambda$ consisting of one part, gives Conjecture~\ref{conj:ffsi} in the limit $q\to \infty$.
\begin{corollary}\label{cor:ffsi}
Let $k>0$ be fixed. The asymptotic formula 
\[
\pi_q(I(f,\epsilon)) \sim \frac{\#I(f,\epsilon)}{k}, \qquad q\to \infty
\] 
holds uniformly for all $q$, all $f\in \mathcal{M}(k,q)$, and all $ \epsilon\in [\epsilon_0(f,q),1)$.
\end{corollary}\label{cor:main}
In Section~\ref{sec:ce} we discuss the cases which are not included in Corollary~\ref{cor:ffsi}. This is done by studying the intervals $I(t^k,\epsilon)$ and showing that the Corollary~\ref{cor:ffsi} fails for $\epsilon<\epsilon_0$ in the cases where $p\neq 2$ or $\deg f'>1$. We do not know whether the corollary holds true in the remaining case. 

Next we discuss polynomials with given factorization type in arithmetic progressions:
For relatively prime $f,D\in \FF_q[t]$ with $D$ monic we define the counting function
\[
\pi_q(k;D,f;\lambda) = \#\{g \equiv f\pmod D\mid \deg g=k \mbox{ and } \lambda_g=\lambda\}. 
\]

We prove the following theorem for polynomials in arithmetic progressions. 
\begin{theorem}\label{thm:main2part}
Let $k$ be a positive integer. Then there exists a constant $c(k)>0$ depending only on $k$ such that for any 
\begin{itemize}
\item partition $\lambda$ of $k$,
\item
prime power $q=p^\nu$,
\item 
$D\in \FF_q[t]$  monic, such that $\deg D\leq k-4$,
\item 
$f\in \FF_q[t]$ relatively prime to $D$,
\end{itemize}
we have 
\[
\left|\pi_q(k;D,f;\lambda) - \frac{\pi_q(k;\lambda)}{\phi(D)}\right| \leq \frac{c(k)}{q^{\frac{1}{2}}} \cdot \frac{\pi_q(k;\lambda)}{\phi(D)}.
\]                                                      
Except when $p=2$ and $(f/D)'$ is constant, we may take $\deg D\leq k-3$.                                                  
\end{theorem}


In particular, when we consider the special case of $\lambda$ being the partition into one part, we get Conjecture~\ref{conj:ffAP} in the limit $q\to \infty$:

\begin{corollary}\label{cor:ffAP}
Let $k$ be a fixed integer. Then 
\[
\pi_q(k;D,f) \sim \frac{\pi_q(k)}{\phi(D)}, \qquad q\to \infty,
\]
holds uniformly for all relatively prime $D,f\in \FF_q[t]$ satisfying $\|D\| \leq q^{k(1-\delta_0)}$, where $\delta_0 = \frac{4}{k}$ if $(f/D)'$ is constant and $p=2$ and $\delta_0=\frac{3}{k}$ otherwise.
\end{corollary}

\section{Auxiliary results}
\subsection{Specializations}
We briefly recall some definitions and basic facts on specializations, see \cite[Section~2.1]{BarySoroker2012} for more details and proofs. Let
\begin{itemize}\renewcommand{\labelitemi}{}\itemsep0pt\parskip2pt 
\item $K$ be a field with algebraic closure $\tilde{K}$,
\item $\Gal(K)={\rm Aut}(\tilde{K}/K)$ the absolute Galois group of $K$,
\item $W=\Spec S$ and $V=\Spec R$ absolutely irreducible smooth affine $K$-varieties,
\item $\rho\colon W\to V$ a finite separable morphism which is generically Galois,
\item $F/E$ the function field Galois extension that corresponds to $\rho$,
\item $K$-rational point $\mathfrak{p}\in V(K)$ that is \'etale in $W$, and 
\item $\mathfrak{P}\in \rho^{-1}(\mathfrak{p})$. 
\end{itemize}
Then $\mathfrak{p}$ induces a homomorphism $\phi_{\mathfrak{p}}\colon R\to K$ that extends to a homomorphism $\phi_{\mathfrak{P}}\colon S\to \tilde{K}$ (via the inclusion $R\to S$ induced by $\rho$). Since $\mathfrak{p}$ is \'etale in $W$, we have a homomorphism $\mathfrak{P}^*\colon \Gal(K)\to \Gal(F/E)$ such that 
\begin{equation}\label{eq:geosol}
\phi_{\mathfrak{P}}(\mathfrak{P}^*(\sigma) (x)) = \sigma(\phi_{\mathfrak{P}}(x)), \quad \forall x\in S,\ \forall \sigma\in \Gal(K).
\end{equation}
For every other $\mathfrak{Q}\in \rho^{-1}(\mathfrak{p})$ there is $\tau\in \Gal(F/E)$ such that $\phi_{\mathfrak{Q}} = \phi_{\mathfrak{P}}\circ \tau$. Thus, by \eqref{eq:geosol}, $\mathfrak{Q}^* = \tau^{-1}\mathfrak{P}^*\tau$ and vice-versa every $\tau^{-1}\mathfrak{P}^*\tau$ comes from a point $\mathfrak{Q}\in \rho^{-1}(\mathfrak{p})$ . Hence $\mathfrak{p}^* = \{\mathfrak{Q}^* \mid \mathfrak{Q}\in \rho^{-1}(\mathfrak{p})\}$ is the orbit of $\mathfrak{P}^*$ under the conjugation action of $\Gal(F/E)$.

The key ingredients in the proof of the following proposition are the Lang-Weil estimates \cite[Theorem~1]{LangWeil1954} and the field crossing argument (as utilized in \cite[Proposition~2.2]{BarySoroker2012}).

\begin{proposition}\label{prop:irrsub}
Let $k$, $m$, and $B$ be positive integers, let $\lambda$ be a partition of $k$, let $\FF$ be an algebraic closure of $\FF_q$, and let $\mathcal{F}\in \FF_q[A_0, \ldots, A_{m}, t]$ be a polynomial that is separable in $t$ with  $\deg \mathcal{F}\leq B$ and $\deg_t \mathcal{F}=k$. Assume that 
\[
\Gal(\mathcal{F}, \FF(A_0, \ldots, A_{m})) = S_k.
\]
Denote by $N = N(\mathcal{F},q)$ the number of $(a_0, \ldots, a_{m}) \in \mathbb{F}_q^{m+1}$ such that $f=\mathcal{F}(a_0, \ldots, a_{m},t)$ has factorization type $\lambda_f=\lambda$. 
Then there is a constant $c(m,B)$ that depends only on $m$ and $B$ such that  
\[
\left|N-P(\lambda)q^{m+1}\right| \leq c(m,B) q^{m+1/2},
\]
where $P(\lambda)$ is defined in \eqref{eq:cycle_type}.
\end{proposition}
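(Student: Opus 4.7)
The plan is a function-field Chebotarev argument: classify each specialization by the Frobenius conjugacy class attached to it, sum over the classes whose cycle type is $\lambda$, and extract the main term via Lang--Weil. With notation as in the recalled specialization formalism, take $V = \Spec \FF_q[A_0,\ldots,A_m]$, $E = \FF_q(A_0,\ldots,A_m)$, let $F$ be the splitting field of $\mathcal{F}$ over $E$, and let $\rho \colon W \to V$ be the normalization of $V$ in $F$, restricted to the open subset $U \subseteq V$ where $\disc_t \mathcal{F} \neq 0$. Since $\disc_t \mathcal{F}$ is a polynomial in the $A_i$ of total degree bounded in terms of $B$, its zero locus contributes $O_B(q^m)$ specializations and can be absorbed into the claimed error; we restrict attention to $U$. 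For any $\mathfrak{p} = (a_0,\ldots,a_m) \in U(\FF_q)$, the specialization $f = \mathcal{F}(\mathfrak{p},t)$ has $k$ distinct roots permuted by $\mathrm{Frob}_q$, and the cycle structure of this permutation is by construction the factorization type $\lambda_f$. In the language of \eqref{eq:geosol}, this cycle structure is the cycle type of the conjugacy class $\mathfrak{p}^*(\mathrm{Frob}_q) \subseteq S_k$, so
\[
N = \sum_{\substack{C \subseteq S_k \\ \text{cycle type}(C) = \lambda}} \#\{\mathfrak{p} \in U(\FF_q) : \mathfrak{p}^*(\mathrm{Frob}_q) = C\} + O_B(q^m).
\]

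For each conjugacy class $C$ I would apply the field-crossing construction of \cite[Proposition~2.2]{BarySoroker2012}: pick $\tau \in C$ and build an auxiliary $\FF_q$-variety $Y_\tau$ of dimension $m+1$ whose $\FF_q$-points project $|Z_{S_k}(\tau)|$-to-one onto $\{\mathfrak{p} \in U(\FF_q) : \mathfrak{p}^*(\mathrm{Frob}_q) = C\}$. The hypothesis $\Gal(\mathcal{F}, \FF(A_0,\ldots,A_m)) = S_k$, i.e.\ that the \emph{geometric} Galois group is the full $S_k$, is exactly what is needed to guarantee that $Y_\tau$ is absolutely irreducible over $\FF_q$. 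Lang--Weil \cite[Theorem~1]{LangWeil1954} then yields $|Y_\tau(\FF_q)| = q^{m+1} + O_{m,B}(q^{m+1/2})$, so each inner count equals $\tfrac{|C|}{|S_k|}\,q^{m+1} + O_{m,B}(q^{m+1/2})$. Summing over the conjugacy classes of cycle type $\lambda$ produces $P(\lambda) q^{m+1} + O_{m,B}(q^{m+1/2})$, as desired.

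The main obstacle is one of uniform control: to obtain a constant $c(m,B)$ depending only on $m$ and $B$, one must bound the (ambient) degree of each twisted variety $Y_\tau$ purely in terms of $m$, $B$, and $|S_k| \leq B!$, so that the Lang--Weil error constant is independent of $q$ and of the choice of $\tau$. The conceptually critical step, however, is the absolute irreducibility of $Y_\tau$: this is precisely where the full $S_k$-hypothesis is used, since any smaller geometric Galois group would a priori allow a constant field extension to split $Y_\tau$ and spoil the Chebotarev heuristic.
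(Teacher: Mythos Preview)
Your proposal is correct and follows essentially the same approach as the paper: remove the discriminant locus, invoke the field-crossing construction of \cite[Proposition~2.2]{BarySoroker2012} to produce an absolutely irreducible twisted variety (using the geometric $S_k$ hypothesis exactly as you describe), and apply Lang--Weil with degree bounded in terms of $B$. Note that in $S_k$ there is a single conjugacy class of each cycle type, so your sum over $C$ has only one term and your $Y_\tau$ is precisely the paper's $\widehat{W}$; the paper secures the uniform degree bound by observing that $\widehat{W}$ becomes isomorphic to $W$ after a finite constant-field extension.
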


\begin{proof}
Let $\mathbf{A}=(A_0,\ldots, A_m)$ and $F$ the splitting field of $\mathcal{F}$ over  $\FF_q(\mathbf{A})$.
Since 
\[
S_k=\Gal(\mathcal{F}, \FF(\mathbf{A}))=\Gal(F\cdot\FF/\FF(\mathbf{A})) \leq \Gal(F/ \FF_q(\mathbf{A}))\leq S_k,
\]
all inequalities are in fact equalities and $\FF_q=F\cap \FF$. In particular, $\alpha\colon \Gal(F/\FF_q(\mathbf{A})) \to \Gal(F\cap \FF/\FF_q)=1$, so 
\begin{equation}\label{eq:ker}
\ker \alpha = S_k.
\end{equation}

Since $\Gal(\FF_q) =\langle \varphi \rangle \cong \hat{\ZZ}$ (with $\varphi$ being the  Frobenius map $x\mapsto x^q$) the homomorphisms $\theta\colon \Gal(\FF_q)\to S_k$ can be parametrized by permutations $\sigma \in S_k$. Explicitly, each $\sigma\in S_k$ gives rise to $\theta_\sigma\colon \Gal(\FF_q)\to S_k$ defined by $\theta_\sigma(\varphi)=\sigma$.
Let $\mathcal{C}$ be the conjugacy class of all permutations $\sigma$ with $\lambda_\sigma=\lambda$ and let $\Theta = \{ \theta_\sigma\mid \sigma\in \mathcal{C}\}$. Fix $\theta\in \Theta$. Clearly $\#\Theta=\#\mathcal{C}$, so by \eqref{eq:ker} we have 
\begin{equation}\label{eq:P}
\frac{\#\ker\alpha}{\#\Theta } = \frac{\#S_k}{\#C}=\frac{1}{P(\lambda)}.
\end{equation}

Let $Z$ be the closed subset of $\mathbb{A}^{m+1}=\Spec \FF_q[\mathbf{A}]$ defined by $D=\disc_t(\mathcal{F})= 0$ and $V=\mathbb{A}^{m+1}\smallsetminus Z =\Spec\FF_q[\mathbf{A},D^{-1}]$. By assumption $\mathcal{F}$ is separable in $t$, so $D$ is a nonzero polynomial of degree depending only on $B$. By \cite[Lemma~1]{LangWeil1954}, there exists a constant $c_1=c_1(m,B)$ such that 
\begin{equation}\label{eq:bddddd}
\#Z(K)\leq c_1 q^m.
\end{equation}
Let $u_1, \ldots, u_k$ be the roots of $\mathcal{F}$ in some algebraic closure of $\FF(A_0, \ldots, A_{m})$ and let $W =  \Spec \FF_q[u_1, \ldots, u_k, D^{-1}] \subseteq \mathbb{A}^{k+1}$. Then $W$ is an absolutely irreducible smooth affine $\FF_q$-variety of degree bounded in terms of $B=\deg \mathcal{F}$. The embedding $\FF_q[\mathbf{A},D^{-1}]\to \FF_q[u_1, \ldots, u_k, D^{-1}]$ induces a finite separable \'etale morphism $\rho\colon W\to V$. 

We apply \cite[Proposition~2.2]{BarySoroker2012} to get an absolutely irreducible smooth $\FF_q$-variety $\widehat{W}$ together with a finite separable \'etale morphism $\pi\colon \widehat{W}\to V$ with the following properties:
\begin{enumerate}\renewcommand{\theenumi}{\roman{enumi}}
\item\label{cond:11}
Let $U\subseteq V(\FF_q)$ be the set of $\mathfrak{p}\in V(\FF_q)$ that are \'etale in $W$ and such that $\mathfrak{p}^*=\Theta$. Then $\pi(\widehat{W}(\FF_q)) = U$. 
\item \label{cond:22}
For every $\mathfrak{p}\in U$,
\[
\#(\pi^{-1}(\mathfrak{p})\cap \widehat{W}(\FF_q)) = \frac{\#\ker\alpha}{\#\Theta } =\frac{1}{P(\lambda)}.
\]
(See \eqref{eq:P} for the last equality.)
\end{enumerate}
By the construction of $\widehat{W}$ in \emph{loc.\ cit.} it holds that $\widehat{W}_L=W_L$, for some finite extension $L/\FF_q$  (where subscript $L$ indicates the extension of scalars to $L$). Hence $\widehat{W}$ and $W$ have the same degree, which is bounded in terms of  $B$. Thus, by \cite[Theorem~1]{LangWeil1954}, there is a constant $c_2=c_2(m,B)$ such that 
\begin{equation}\label{eq:LWbd}
|\#\widehat{W}(\FF_q)-q^{m+1}| \leq c_2 q^{m+1/2}.
\end{equation}
Applying \eqref{cond:22} gives $P(\lambda)\cdot \#\pi(\widehat{W}(\FF_q))=\#\widehat{W}(\FF_q)$. So multiplying \eqref{eq:LWbd} by $P(\lambda)$ implies
\begin{equation}\label{eq:bdd}
|\#\pi(\widehat{W}(\FF_q))-P(\lambda)q^{m+1}| \leq P(\lambda)c_2 q^{m+1/2}\leq c_2q^{m+1/2}. 
\end{equation}

Since for $\mathfrak{p}=(a_0,\ldots, a_m)\in V(\FF_q)\subseteq \FF_q^{m+1}$ we have $\mathfrak{p}^* = \Theta$ if and only if the orbit type of $\mathfrak{p}^*$ is $\lambda$ (in the sense of \cite[p.~859]{BarySoroker2012}). Thus  $\lambda_{\mathcal{F}(a_0,\ldots, a_m,t)}=\lambda$ if and only 
$\mathfrak{p}^* = \Theta$ (\cite[Lemma~2.1]{BarySoroker2012}). Let 
\[
X = \{\mathfrak{p}=(a_0,\ldots, a_m)\in \FF_q^{m+1} \mid  \lambda_{\mathcal{F}(a_0,\ldots, a_m,t)}=\lambda \mbox{ and } D(a_0,\ldots, a_m)\neq 0\}.
\]
Then $N=\#X$. Equation \eqref{cond:11} gives $X\cap V(\FF_q) = \pi(\widehat{W}(\FF_q))$. Since $V=\mathbb{A}^{m+1} \smallsetminus Z$, it follows from   \eqref{eq:bddddd} and \eqref{eq:bdd}  that
\begin{eqnarray*}
\left|N-P(\lambda)q^{m+1}\right| &=& \left|\#X-P(\lambda)q^{m+1}\right|\\
				&=&\left|\#(X\cap V(\FF_q)) +\#(X\cap Z(\FF_q)) -P(\lambda)q^{m+1}\right|\\
				& \leq& \left|\#(X\cap V(\FF_q))-P(\lambda)q^{m+1}\right| + \#(X\cap Z(\FF_q)) \\
				&\leq &\left|\pi(\widehat{W}(\FF_q))-P(\lambda)q^{m+1}\right| + \#Z(\FF_q) \\
				&\leq & c_2q^{m+1/2} + c_1 q^{m} \leq c(m,B) q^{m+1/2},
\end{eqnarray*}
where $c=c_1+c_2$.
\end{proof}

\subsection{Calculating a Galois Group}
\begin{lemma}\label{lem:separableirreducible}
Let $F$ be an algebraically closed field, $\mathbf{A}=(A_0,\ldots, A_m)$ an $m$-tuple of variables with $m\geq 1$,  and $f,g\in F[t]$ relatively prime polynomials. Then $\mathcal{F}(\mathbf{A},t)=f(t)+g(t) \cdot (\sum_{i=0}^m A_it^i)$ is separable in $t$ and irreducible in the ring $F(\mathbf{A})[t]$. 
\end{lemma}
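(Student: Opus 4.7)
The plan is to handle irreducibility first, exploiting that $\mathcal{F}$ is linear in each of the variables $A_0,\ldots,A_m$, and then to deduce separability in $t$ from irreducibility plus a direct inspection of $\partial_t\mathcal{F}$.

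For irreducibility, I would read $\mathcal{F}$ as an element of $F[t][\mathbf{A}]$:
\[
\mathcal{F}=f+gA_0+gtA_1+gt^2A_2+\cdots+gt^mA_m,
\]
a polynomial of total degree $1$ in $\mathbf{A}$ with coefficients in $F[t]$. Its content in $F[t]$ is $\gcd(f,g,gt,\ldots,gt^m)=\gcd(f,g)=1$ by hypothesis, so $\mathcal{F}$ is primitive in $F[t][\mathbf{A}]$. Any nontrivial factorization in $F[\mathbf{A},t]$ would force one factor to have $\mathbf{A}$-degree zero, hence to lie in $F[t]$ and divide the content, making it a unit, a contradiction. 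Therefore $\mathcal{F}$ is irreducible in $F[\mathbf{A},t]$, and since $\deg_t\mathcal{F}\geq 1$ this same polynomial is automatically primitive when viewed in $F[\mathbf{A}][t]$, so Gauss's lemma lifts the irreducibility to $F(\mathbf{A})[t]$.

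For separability, irreducibility in $F(\mathbf{A})[t]$ reduces the problem to checking $\partial_t\mathcal{F}\neq 0$. Differentiating term by term yields
\[
\partial_t\mathcal{F}=f'(t)+g'(t)\sum_{i=0}^m A_it^i+g(t)\sum_{i=1}^m iA_it^{i-1},
\]
and as a polynomial in $\mathbf{A}$ its coefficient of $A_0$ is $g'(t)$ while its coefficient of $A_1$ is $g(t)+g'(t)t$. If $g'\neq 0$ the first is already nonzero; otherwise the second collapses to $g$, which is nonzero since $\gcd(f,g)=1$ excludes $g=0$ once $\mathcal{F}$ has positive $t$-degree. Either way $\partial_t\mathcal{F}\neq 0$, so $\mathcal{F}$ is separable in $t$.

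The only subtle point I anticipate is the Gauss's-lemma transfer: one must check primitivity of $\mathcal{F}$ in both the $F[t][\mathbf{A}]$ and $F[\mathbf{A}][t]$ presentations, so that irreducibility passes cleanly from the polynomial ring $F[\mathbf{A},t]$ to the localized ring $F(\mathbf{A})[t]$. Once that bridge is in place, the content calculation (using $\gcd(A,B+AC)=\gcd(A,B)$) and the derivative inspection are both routine.
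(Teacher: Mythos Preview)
Your proof is correct and follows essentially the same approach as the paper: irreducibility via linearity in $A_0$ together with $\gcd(f,g)=1$ and Gauss's lemma, then separability by exhibiting a nonvanishing coefficient of $\partial_t\mathcal{F}$ (the paper evaluates at a point $\alpha$ with $g(\alpha)\neq 0$, while you look directly at the $A_0$- and $A_1$-coefficients). Your separability argument is in fact a bit more carefully written than the paper's, which suppresses the $g'(\alpha)\alpha$ contribution to the $A_1$-coefficient.
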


\begin{proof}
Since $\mathcal{F}$ is linear in $A_0$ and since $f,g$ are relatively prime, it follows that $\mathcal{F}$ is irreducible in $F[\mathbf{A},t]$,  hence by Gauss' lemma  also in $F(A)[t]$. Take $\alpha\in F$ with $g(\alpha)\neq 0$. Then  
\[
\mathcal{F}'(\alpha)=f'(\alpha) +g'(\alpha)(\sum_{i=0}^m A_i \alpha^i) + g(\alpha)A_1 + (\sum_{i=2}^m i A_i  \alpha^{i-1}) \neq 0,
\]
hence $\mathcal{F}'\neq 0$, so $\mathcal{F}$ is separable.
\end{proof}

\begin{lemma}\label{lem:dt}
Let $F$ be an algebraically closed field, $\mathbf{A}=(A_0,\ldots, A_m)$ an $m$-tuple of variables with $m\geq 2$,  and $f,g\in F[t]$ relatively prime polynomials with $\deg f>\deg g$.  The Galois group $G$ of $\mathcal{F}(\mathbf{A},t)=f(t)+g(t)\cdot (\sum_{i=0}^m A_it^i)$ over $F(\mathbf{A})$ is doubly transitive (with respect to the action on the roots of $\mathcal{F}$).
\end{lemma}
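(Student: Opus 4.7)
My plan uses the standard criterion: the Galois group $G$ of a separable irreducible polynomial is doubly transitive on its roots iff the stabilizer of one root acts transitively on the remaining roots. Transitivity of $G$ follows from Lemma~\ref{lem:separableirreducible}, so I need only fix a root $\alpha$ of $\mathcal{F}$ in $\overline{F(\mathbf{A})}$ and show that $Q(t):=\mathcal{F}(\mathbf{A},t)/(t-\alpha)$ is irreducible in $F(\mathbf{A})(\alpha)[t]$.

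First I would rewrite the base field. Since $\mathcal{F}$ is linear in $A_0$ with coefficient $g(t)$, and since a transcendence-degree count shows that $\alpha,A_1,\ldots,A_m$ are algebraically independent over $F$ (so $\alpha\notin F$ and $g(\alpha)\neq 0$), the relation $\mathcal{F}(\mathbf{A},\alpha)=0$ solves uniquely for $A_0$, identifying $F(\mathbf{A})(\alpha)$ with the purely transcendental extension $F(\alpha,A_1,\ldots,A_m)$. Substituting this expression for $A_0$ back into $\mathcal{F}$ and dividing by $t-\alpha$ yields
\[
Q(t)=\frac{h(t,\alpha)}{g(\alpha)}+g(t)\sum_{i=1}^{m}A_i\,\frac{t^i-\alpha^i}{t-\alpha},
\]
where $h(t,\alpha):=[f(t)g(\alpha)-f(\alpha)g(t)]/(t-\alpha)\in F(\alpha)[t]$. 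The key structural feature I would exploit is that $Q$ is linear in $A_1$ with $A_1$-coefficient $g(t)\in F[t]$.

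To prove irreducibility I would view $g(\alpha)Q$ as an element of the polynomial ring $F(\alpha)[A_1,\ldots,A_m,t]$. A nontrivial factorization there, combined with degree one in $A_1$, must split off a factor $P$ independent of $A_1$; matching the $A_1^1$-coefficient forces $P\mid g(\alpha)g(t)$, and since $g(\alpha)g(t)\in F(\alpha)[t]$ this drives $P\in F(\alpha)[t]$ with $P\mid g(t)$, while matching the $A_1^0$-coefficient then forces $P\mid h(t,\alpha)$. Thus the crux is the coprimality $\gcd_{F(\alpha)[t]}(g(t),h(t,\alpha))=1$: any common root $\beta\in F=\overline{F}$ would satisfy $h(\beta,\alpha)=f(\beta)g(\alpha)/(\beta-\alpha)=0$, but transcendence of $\alpha$ over $F$ rules out $\beta=\alpha$ and $g(\alpha)=0$, and coprimality of $f$ and $g$ rules out $f(\beta)=0$. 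Hence $P$ is a unit, $Q$ is irreducible, and $G_\alpha$ acts transitively on the remaining roots.

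I expect the main technical obstacle to be the bookkeeping in the multivariable factorization step — verifying that a potential factor really must descend to $F(\alpha)[t]$ rather than genuinely involve the auxiliary variables $A_2,\ldots,A_m$ — but the degree-one-in-$A_1$ structure combined with the fact that the $A_1$-coefficient $g(t)$ is a polynomial in $t$ alone handles this cleanly via Gauss's lemma. The hypotheses $\gcd(f,g)=1$ and $\deg f>\deg g$ enter decisively at the coprimality step, which is the real content of the argument.
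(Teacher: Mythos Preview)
Your proof is correct and follows the same high-level strategy as the paper --- establish double transitivity by showing that the stabilizer of one root acts transitively, i.e.\ that $\mathcal{F}/(t-\alpha)$ is irreducible over $F(\mathbf{A})(\alpha)$ --- but the execution is different. The paper first translates $t$ so that $f(0)=0$, then \emph{specializes} $A_0\mapsto 0$; this makes $t=0$ a root of the specialized polynomial $\bar{\mathcal{F}}$, and the quotient $\bar{\mathcal{F}}/t = f_0 + g\sum_{i=1}^m A_i t^{i-1}$ is literally of the shape handled by Lemma~\ref{lem:separableirreducible} (with one fewer free variable), so that lemma is reused verbatim. The conclusion is then lifted back to $G$ via the standard fact that the Galois group of a separable specialization embeds in $G$. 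You instead adjoin a \emph{generic} root $\alpha$, eliminate $A_0$ through the relation $\mathcal{F}(\mathbf{A},\alpha)=0$ to identify $F(\mathbf{A})(\alpha)\cong F(\alpha,A_1,\dots,A_m)$, and prove irreducibility of $Q$ directly by a Gauss-lemma/content argument exploiting that $Q$ is linear in $A_1$ with $t$-only coefficient $g(t)$. The paper's route is shorter because it recycles the previous lemma; yours is more self-contained and sidesteps the specialization-embedding step. One small correction: contrary to your closing remark, neither your argument nor the paper's actually uses the hypothesis $\deg f>\deg g$ --- only $\gcd(f,g)=1$ and $\deg f\ge 1$ are needed here; the degree inequality is carried for downstream use.
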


\begin{proof}
By replacing $t$ by $t+\alpha$, where $\alpha\in F$ is a root of $f$, we may assume that $f(0)=0$. Hence $f_0(t)=f(t)/t$ is a polynomial. By Lemma~\ref{lem:separableirreducible} the group $G$ is transitive. 
The image of $\mathcal{F}$ under the substitution $A_0=0$ is
\[
\bar{\mathcal{F}}=f(t)+g(t)\cdot \big(\sum_{i=0}^m A_it^i\big)=t\big( f_0(t) +g (t) \cdot\big(\sum_{i=1}^{m-1} A_it^{i-1}\big)\big).
\]
Lemma~\ref{lem:separableirreducible} then gives that $ f_0(t) +g (t) \cdot \big(\sum_{i=1}^{m-1}A_it^{i-1}\big) $ is separable and irreducible. This means that the stabilizer of the root $t=0$ in the Galois group of $\bar{\mathcal{F}}$ acts transitively on the other roots. But since $\bar{\mathcal{F}}$ is separable, its Galois group embeds into $G$, so the stabilizer of a root of $\mathcal{F}$ in $G$ is transitive. Thus $G$ is doubly transitive.
\end{proof}

For a rational function $\psi(t)\in F(t)$ the first and second Hasse-Schmidt derivatives of $\psi$ are denoted by $\psi'$ and $\psi^{[2]}$, respectively, and defined by 
\[
\psi(t+u) \equiv\psi(t) +\psi'(t)u+\psi^{[2]}(t)u^2 \mod u^3.
\]
A trivial observation is that $\psi'$ is the usual derivative of $\psi$ and, if the characteristic of $F\neq 2$, then $\psi^{[2]}=\frac{1}{2}\psi''$.

\begin{lemma}\label{lem:morse}
Let $\psi(t)\in F(t)$ be a rational function with $\psi^{[2]}$ nonzero and $A_1$ a variable. Then $\psi'(t)+A_1$ and $\psi^{[2]}(t)$ have no common zeros. 
\end{lemma}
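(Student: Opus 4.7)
The plan is to argue by contradiction, working with the numerators after clearing denominators. Write $\psi'=N/D$ and $\psi^{[2]}=\tilde N/\tilde D$ in lowest terms with $N,D,\tilde N,\tilde D\in F[t]$; the hypothesis $\psi^{[2]}\neq 0$ gives $\tilde N\neq 0$. Then
\[
\psi'(t)+A_1=\frac{N(t)+D(t)A_1}{D(t)},
\]
so a common zero of $\psi'(t)+A_1$ and $\psi^{[2]}(t)$ in $\overline{F(A_1)}$ is precisely a common root of the numerators $N(t)+D(t)A_1\in F[A_1,t]$ and $\tilde N(t)\in F[t]$.

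Suppose $\tau\in\overline{F(A_1)}$ is such a common root. The polynomial $\tilde N(t)$ has coefficients in the algebraically closed field $F$, so every one of its roots in $\overline{F(A_1)}$ already lies in $F$; hence $\tau\in F$. Consequently $N(\tau),D(\tau)\in F$, and the relation $N(\tau)+D(\tau)A_1=0$, viewed as a polynomial equation in $A_1$ of degree at most one over $F$, forces $N(\tau)=D(\tau)=0$. This contradicts the coprimality of $N$ and $D$ in $F[t]$, and completes the argument.

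There is no serious obstacle here; the only point that needs care is the interpretation of ``common zeros''---namely, as common roots of the numerators after clearing denominators (equivalently, failure of coprimality in $F(A_1)[t]$). With that interpretation in hand, algebraic closedness of $F$ pins the hypothetical common root $\tau$ into $F$, and the transcendence of $A_1$ over $F$ together with $\gcd(N,D)=1$ immediately forces the contradiction. Notably, this clean approach sidesteps delicate characteristic-$p$ behavior of the second Hasse--Schmidt derivative at poles of $\psi$ (where $\psi^{[2]}$ may well vanish), which would otherwise complicate a more naive pointwise argument.
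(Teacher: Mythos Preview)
Your argument is correct and is essentially the paper's one-line proof (``the roots of $\psi'+A_1$ are transcendental over $F$, while those of $\psi^{[2]}$ are algebraic'') spelled out in detail via numerators and denominators. One small caveat: the lemma as stated does not assume $F$ is algebraically closed, so instead of concluding $\tau\in F$ you should only say that $\tau$ is algebraic over $F$; then $N(\tau),D(\tau)\in\overline{F}$ while $A_1$ is transcendental over $\overline{F}$, and the identical contradiction with $\gcd(N,D)=1$ follows.
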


\begin{proof}
This is obvious since the roots of $\psi'+A_1$ are transcendental over $F$, while those of $\psi^{[2]}$ are algebraic.
\end{proof}

\begin{lemma}\label{lem:excellent}
Let $F$ be an algebraically closed field of characteristic $p\geq 0$, $m\geq 2$, $\mathbf{A}=(A_1, \ldots, A_m)$, $f,g\in F[t]$ relatively prime polynomials and put $\psi=f/g$ and $\Psi = \psi +\sum_{i=1}^m A_it^i$. Assume $\deg f>\deg g+m$. Further assume that $\psi^{'}$ is not a constant if $p=m=2$. Then the system of equations
\begin{equation}\label{eq:excellentmorse}
\begin{array}{rcl}
\Psi'(\rho_1)&=&0\\
\Psi'(\rho_2)&=&0\\
\Psi(\rho_1)&=&\Psi(\rho_2)
\end{array}
\end{equation}
has no solution with distinct $\rho_1, \rho_2$ in an algebraic closure $\Omega$ of $F(\mathbf{A})$.
\end{lemma}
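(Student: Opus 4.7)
The plan is to suppose that distinct $\rho_1, \rho_2 \in \Omega$ satisfy the three equations and derive a transcendence-degree contradiction with the algebraic independence of $A_1, \ldots, A_m$ over $F$. First, since $\Psi'(t) = \psi'(t) + \sum_{i=1}^m i A_i t^{i-1}$ depends linearly on $A_1$ through its constant term, the equation $\Psi'(\rho_1)=0$ gives
\[
A_1 = -\psi'(\rho_1) - \sum_{i=2}^m i A_i \rho_1^{i-1} \in F(\rho_1, A_2, \ldots, A_m).
\]
Substituting this into the remaining two equations and dividing out appropriate positive powers of the nonzero factor $\rho_1 - \rho_2$ (with the aid of $\rho_1^n - \rho_2^n = (\rho_1 - \rho_2)(\rho_1^{n-1} + \cdots + \rho_2^{n-1})$ and an analogous identity for $\rho_1^i - \rho_2^i - i\rho_1^{i-1}(\rho_1-\rho_2)$) yields two linear equations (I) and (II) in the remaining unknowns $A_2, \ldots, A_m$ whose coefficients are polynomials in $\rho_1, \rho_2$ and whose right-hand sides are explicit rational functions of $\rho_1, \rho_2$ involving $\psi$ and $\psi'$.

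For $m \geq 3$, the key computation is that the $2 \times 2$ minor of the coefficient matrix of (I), (II) at the columns corresponding to $A_2$ and $A_3$ equals
\[
\det\begin{pmatrix} 2 & 3(\rho_1+\rho_2) \\ 1 & 2\rho_1+\rho_2 \end{pmatrix} = \rho_1 - \rho_2,
\]
which is nonzero in $\Omega$ in every characteristic. Hence the system (I), (II) has rank $2$, so one may solve for $A_2$ and $A_3$ as rational functions of $\rho_1, \rho_2, A_4, \ldots, A_m$. Combined with the formula for $A_1$, this gives $F(A_1, \ldots, A_m) \subseteq F(\rho_1, \rho_2, A_4, \ldots, A_m)$; comparing transcendence degrees over $F$ produces the inequality $m \leq 2 + (m - 3) = m - 1$, the desired contradiction.

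For $m = 2$, (I) and (II) are two linear equations in the single unknown $A_2$ and their compatibility forces the algebraic relation
\[
R(\rho_1, \rho_2) := 2\bigl(\psi(\rho_1) - \psi(\rho_2)\bigr) - (\rho_1 - \rho_2)\bigl(\psi'(\rho_1) + \psi'(\rho_2)\bigr) = 0
\]
in $F(\rho_1, \rho_2)$. I would verify that $R \not\equiv 0$. In characteristic $2$, $R$ collapses to $(\rho_1 - \rho_2)(\psi'(\rho_1) - \psi'(\rho_2))$ (using $-1=1$), whose nontriviality is exactly the standing hypothesis that $\psi'$ is nonconstant. Otherwise, a Taylor expansion in $u = \rho_2 - \rho_1$ via Hasse--Schmidt derivatives gives $R(\rho_1, \rho_1 + u) = \sum_{k \geq 3}(k-2)\psi^{[k]}(\rho_1)\, u^k$, and I would exhibit a nonvanishing coefficient with the help of $\deg f > \deg g + 2$: if $g \nmid f$, then $\psi$ has a pole at which every $\psi^{[k]}$ is nonzero, in particular for $k = 3$; if $g \mid f$, then $\psi$ is a polynomial of degree $d \geq 3$, and Lucas' theorem on $\binom{d}{k} \pmod p$ locates a $k \geq 3$ with both $\psi^{[k]} \neq 0$ and $p \nmid (k-2)$. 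Nontriviality of $R$ then forces $\operatorname{trdeg}_F F(\rho_1, \rho_2) \leq 1$, while the explicit formulas place $A_1, A_2 \in F(\rho_1, \rho_2)$, contradicting the algebraic independence of $A_1, A_2$ over $F$.

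The main technical obstacle lies in the verification of $R \not\equiv 0$ in the subcase $m = 2$, $p \neq 2$: ruling out the possibility that every Hasse--Schmidt Taylor coefficient vanishes modulo $p$ for $k \geq 3$ with $p \nmid (k-2)$ requires a careful case distinction between polynomial and properly rational $\psi$, together with a refined use of the hypothesis $\deg f > \deg g + 2$ and the $p$-adic combinatorics of the exponents appearing in $\psi$.
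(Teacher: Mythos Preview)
Your overall framework—eliminate $A_1$, reduce to a linear system in $A_2,\ldots,A_m$, and derive a transcendence-degree contradiction from the nonvanishing of a suitable determinant—is exactly the paper's strategy, phrased there as a dimension count on the incidence variety $T\subseteq\mathbb{A}^2\times\mathbb{A}^m$. For $m\geq 3$ your $2\times 2$ minor at the $(A_2,A_3)$ columns and the paper's extraction of the $A_3$-coefficient of $d(\rho)$ are the same computation up to bookkeeping, and for $m=2$, $p=2$ both arguments reduce immediately to the hypothesis that $\psi'$ is nonconstant.

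The divergence, and the place where your sketch has a genuine gap, is the case $m=2$, $p\neq 2$. Your claim that if $\psi$ has a finite pole then every $\psi^{[k]}$ is nonzero (so that in particular $\psi^{[3]}\neq 0$) is false in positive characteristic. For instance, in characteristic $p\geq 5$ the function $\psi=t^p+(t-a)^{-p}$ has $f=t^p(t-a)^p+1$, $g=(t-a)^p$, $\gcd(f,g)=1$, $\deg f=2p>\deg g+2$, yet $\psi^{[k]}\equiv 0$ for every $1\leq k\leq p-1$; this follows at once from $(t+u)^p=t^p+u^p$, which forces the Hasse--Taylor expansion to contain only powers $u^{pj}$. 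Your Taylor-coefficient criterion ``$(k-2)\psi^{[k]}\not\equiv 0$ for some $k\geq 3$'' is the right target, and it \emph{can} be rescued in the pole case: at a pole of order $e$ the leading Laurent coefficients give $\sum_{k\geq 3}(k-2)\binom{-e}{k}u^k=(1+u)^{-e-1}\bigl(-(e+2)u-2\bigr)+2-eu$, and a degree comparison in $u$ (using only $p\neq 2$) shows this is nonzero, so some admissible $k$ exists. But this is not what you wrote, and ``take $k=3$'' does not suffice.

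The paper avoids this Hasse-derivative bookkeeping entirely. It first normalizes so that $f(0)=0$ and $f(t)/t$ is separable (absorbing a quadratic shift into $A_1,A_2$), and then evaluates $d(\rho_1,\rho_2)$ at pairs—and if necessary a triple—of roots of $f$, exploiting that a simple root $\alpha$ of $\psi$ has $\psi'(\alpha)\neq 0$. This route is short and characteristic-free once $p\neq 2$; your route, once the pole case is patched as above, has the aesthetic advantage of treating polynomial and properly rational $\psi$ by a single expansion, at the cost of $p$-adic binomial combinatorics.
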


\begin{proof}
For short we write $\rho=(\rho_1,\rho_2)$.
Let
\[
-\varphi(t)=\bigg(\psi+\sum_{i=3}^m A_it^i\bigg)' =\psi'+\sum_{i=3}^m iA_it^{i-1}=\frac{f' g-fg'}{g^2} + \sum_{i=3}^miA_it^{i-1}.
\]
Then $\Psi'(t)=2A_2t+A_1-\varphi(t)$. If $m=2$, then $\varphi=-\psi'$, the latter being nonconstant if also $p=2$, by assumption. 

Let
\begin{align*}
c(\rho) &= \psi(\rho_1)-\psi(\rho_2) + \sum_{i=3}^m(\rho_1^{i}-\rho_2^{i})A_i\\ 
&= \Psi(\rho_1)-\Psi(\rho_2) - ((\rho_1^2-\rho_2^2)A_2+(\rho_1-\rho_2)A_1).
\end{align*}

The system of equations \eqref{eq:excellentmorse} defines an algebraic set $T\subseteq \mathbb{A}^2\times \mathbb{A}^{m}$ in the variables $\rho_1, \rho_2, A_1, \ldots, A_m$.
Let $\alpha\colon T\to \mathbb{A}^2$ and $\beta\colon T\to \mathbb{A}^m$ the projection maps. The system of equations \eqref{eq:excellentmorse} takes the matrix form 
\begin{equation}\label{eq:excellent}
M(\rho) \cdot \big(\begin{smallmatrix} A_2\\A_1\end{smallmatrix}\big) = B(\rho)=\Big(\begin{smallmatrix} \varphi(\rho_1)\\ \varphi(\rho_2)\\c(\rho)\end{smallmatrix}\Big), 
\end{equation}
where $M(\rho) = \left(\begin{smallmatrix}
2 \rho_1&1\\
2\rho_2&1\\
\rho_2^2-\rho_1^2&\rho_2-\rho_1
\end{smallmatrix}\right)$.
For every $\rho\in U=\{\rho\mid \rho_1\neq \rho_2,\ \varphi(\rho_i)\neq \infty, i=1,2\}$, the rank of $M(\rho)$ is $2$. Thus the dimension of the fiber $\alpha^{-1}(\rho)$, for any $\rho\in U$, is at most $m-2$.
Moreover, for a given $\rho\in U$, \eqref{eq:excellent} is solvable if and only if $\mathop{\rm rank}(M|B)=2$ if and only if $d(\rho)=\det(M|B)=0$. Thus, the solution space (restricting to $\rho\in U$) lies in $d(\rho)=0$. 

It suffices to prove that $d(\rho)$ is a nonzero rational function in the variables $\rho=(\rho_1,\rho_2)$. Indeed, this implies that  $\dim (\alpha(T))\leq \dim \{d(\rho)=0\} = 1$, so $\dim T \leq 1+m-2<m$. Thus $\beta(T)$ does not contain the generic point of $\mathbb{A}^m$ which is $\mathbf{A}=(A_0,\ldots, A_m)$ and hence \eqref{eq:excellentmorse} has no solution with $\rho\in \Omega^2$. 

A straightforward calculation gives 
\[
d(\rho) = (\rho_1-\rho_2) (2c(\rho) +(\rho_1-\rho_2)(\varphi(\rho_1)+\varphi(\rho_2))).
\]
If $m\geq 3$, then the coefficient of $A_3$ in $2c(\rho) +(\rho_1-\rho_2)(\varphi(\rho_1)+\varphi(\rho_2))$ is
\[
2(\rho_1^3-\rho_2^3)+3(\rho_1^2-\rho_2^2),
\]
which is nonzero in any characteristic and we are done. 

To this end assume $m=2$. If $p=2$, then $2c(\rho)=0$. Since $\varphi$ is not constant in this case, we have $\varphi(\rho_1)+ \varphi(\rho_2)\neq 0$ and we are done.  

Finally assume $m=2$ and $p\neq 2$. Then $c(\rho)=\psi(\rho_1)-\psi(\rho_2)$ and $\varphi=-\psi'$.  
We may assume without loss of generality that $f(0)=0$ (and hence $\psi(0)=0$).  Since $f(t)/t+g(t)(A_2t + A_1)$ is separable (Lemma~\ref{lem:separableirreducible}), we can replace $A_1$ and $A_2$ by $A_1+\alpha_1$ and $A_2+\alpha_2$, respectively, and $f$ by $f(t)+g(t)(\alpha_2t^2+\alpha_1t)$, for suitably chosen $\alpha_1,\alpha_2\in F$, to assume that $f(t)/t$ is separable. Since $\deg f>\deg g  +m\geq 2$, this implies that $f(t)$ has at least one simple root, say $\alpha$. Then $\alpha$ is a simple root of $\psi=f/g$. So $\psi'(\alpha)\neq 0$. Let $\beta\neq \alpha$ be another root of $f$, hence of $\psi$. 

If $\psi'(\beta)=0$, then we have $c(\alpha,\beta)=\psi(\alpha)-\psi(\beta)=0$, so 
\[
d(\alpha,\beta) =-(\alpha-\beta)^2 \psi'(\alpha)\neq 0
\]
and we are done. If $\psi'(\beta)\neq0$, then $\beta$ is a simple root of $\psi$, hence of $f$. But $\deg f> 2$, so there must be another root $\gamma$ of $\psi$. If $d=0$, then we must have 
\begin{align*}
\frac{d(\alpha,\beta)}{-(\alpha-\beta)^2}=0&= \psi'(\alpha) + \psi'(\beta)\\
\frac{d(\alpha,\gamma)}{-(\alpha-\gamma)^2}=0&= \psi'(\alpha) + \psi'(\gamma)\\
\frac{d(\gamma,\beta)}{-(\gamma-\beta)^2}=0&= \psi'(\gamma) + \psi'(\beta).
\end{align*}
So $2\psi'(\alpha)=0$. This contradiction implies that $d\neq 0$, as needed.
\end{proof}

\begin{proposition}\label{prop:Galoisgroup}
Let $F$ be a field of characteristic $p\geq 0$, let $1\leq m<k$, let $\mathbf{A}=(A_0, \ldots, A_{m})$ an $(m+1)$-tuple of variables, and let $f,g\in F[t]$ be relatively prime polynomials with $\deg g+m<k=\deg f$. Assume 
\begin{enumerate}
\item $2\leq m$ if $\deg g>0$,
\item $2\leq m$ if $p\mid k(k-1)$, and 
\item \label{eq:nonconstant} $(f/g)'$ is not constant if $p=m=2$.
\end{enumerate} 
Then the Galois group of $\mathcal{F}(\mathbf{A},t) = f(t)+g(t)\cdot (\sum_{i=0}^m A_i t^i)$ over $F(\mathbf{A})$ is 
\[
\Gal\left(\mathcal{F}, F(\mathbf{A})\right)=S_k.
\]
\end{proposition}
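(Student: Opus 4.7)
My plan is to prove $G = \Gal(\mathcal{F}, F(\mathbf{A})) = S_k$ by showing that $G$ is doubly transitive and contains a transposition; a classical theorem then forces $G = S_k$, since any primitive subgroup of $S_k$ containing a transposition must be the full symmetric group, and double transitivity implies primitivity. Without loss of generality I replace $F$ by its algebraic closure, because extending the base field only shrinks the Galois group, viewed as a subgroup of $S_k$. It is convenient to rewrite $\mathcal{F}(\mathbf{A},t) = g(t)\bigl(\Psi(t) + A_0\bigr)$ with $\Psi(t) = f(t)/g(t) + \sum_{i=1}^m A_i t^i$, so that $G$ is the geometric monodromy group of the degree-$k$ rational map $\Psi \colon \mathbb{P}^1_t \to \mathbb{P}^1_{A_0}$ over $K = F(A_1, \ldots, A_m)$.

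\emph{Double transitivity.} When $m \geq 2$, this follows from Lemma~\ref{lem:dt} applied to the coprime pair $(f + g A_0,\, g)$ inside $F(A_0)[t]$, which has degree $k > \deg g$ and still uses the required $m \geq 2$ free parameters $A_1, \ldots, A_m$. When $m = 1$, hypothesis (1) forces $\deg g = 0$, and after rescaling $\mathcal{F} = f(t) + A_0 + A_1 t$. Fixing a root $\tilde\alpha$ eliminates $A_0$, and the complementary factor $\bigl(f(t) - f(\tilde\alpha)\bigr)/(t - \tilde\alpha) + A_1$ is linear in $A_1$, hence irreducible over $F(A_1, \tilde\alpha)$ by Gauss' lemma; the stabilizer of $\tilde\alpha$ therefore acts transitively on the remaining $k-1$ roots.

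\emph{Producing a transposition.} An inertia generator at a branch point of $\Psi$ is a product of disjoint cycles whose lengths equal the ramification indices of the preimages, so a simple critical point whose critical value is attained by no other critical point contributes exactly a transposition. Two Morse-type conditions need to be verified: (i) every root $\rho$ of $\Psi'$ satisfies $\Psi^{[2]}(\rho) \neq 0$ (simple critical points), and (ii) distinct roots of $\Psi'$ map to distinct values under $\Psi$ (distinct critical values). Condition (ii) is exactly Lemma~\ref{lem:excellent}, whose hypotheses match ours. Condition (i) is handled via Lemma~\ref{lem:morse}, exploiting that $A_1$ enters $\Psi'$ as a free constant while $\Psi^{[2]}$ is independent of $A_1$; hypothesis (2) is used in the $m = 1$ case to guarantee $\psi^{[2]} = f^{[2]} \neq 0$ via $\binom{k}{2} \neq 0$, and hypothesis (3) handles the borderline case $p = m = 2$.

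The main obstacle is the low-characteristic bookkeeping in the transposition step: the Hasse--Schmidt derivative $\Psi^{[2]}$ can itself degenerate when $p$ or $m$ is small, and it is precisely in these corner cases that hypotheses (2) and (3) intervene to rescue the Morse-like behavior needed for the monodromy to pick up a transposition.
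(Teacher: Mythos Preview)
Your approach for $m \geq 2$ matches the paper's exactly: reduce to $F$ algebraically closed, get double transitivity from Lemma~\ref{lem:dt}, find a simple critical point via Lemma~\ref{lem:morse}, separate the critical values via Lemma~\ref{lem:excellent}, read off a transposition from the inertia over a simple branch value, and conclude $G=S_k$.

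The gap is in the case $m=1$. The paper does \emph{not} attempt the Morse/monodromy argument there; it simply observes that hypotheses (1) and (2) force $\deg g=0$ and $p\nmid k(k-1)$, and then invokes Cohen's theorem \cite[Theorem~1]{Cohen1980}. Your direct argument breaks precisely at condition (ii): Lemma~\ref{lem:excellent} carries the hypothesis $m\geq 2$, so its conclusion is unavailable when $m=1$, and the sentence ``whose hypotheses match ours'' is false in that case. Concretely, with $g$ constant and $\Psi=f+A_1t$, you need that distinct roots $\rho_1\neq\rho_2$ of $f'+A_1$ always satisfy $f(\rho_1)-\rho_1 f'(\rho_1)\neq f(\rho_2)-\rho_2 f'(\rho_2)$. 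This is the Morse condition for the generic linear perturbation of $f$; it is true under $p\nmid k(k-1)$, but it is a genuine statement about $f$ that requires proof and is exactly what Cohen's theorem supplies. Nothing in Lemmas~\ref{lem:separableirreducible}--\ref{lem:morse} yields it, since those lemmas only control the \emph{multiplicity} of each critical point, not the \emph{collision} of distinct critical values.

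Your double-transitivity argument for $m=1$ is fine, and your handling of condition (i) via $\Psi^{[2]}=f^{[2]}\neq 0$ under $p\nmid\binom{k}{2}$ is correct; but without condition (ii) you have not produced a transposition, so the $m=1$ case is incomplete. Either cite Cohen as the paper does, or supply a separate proof that the critical values of $f(t)+A_1t$ are distinct.
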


\begin{proof}
Let $\tilde{F}$ be an algebraic closure of $F$. Since $ \Gal(\mathcal{F},\tilde{F}(\mathbf{A}))\leq \Gal(\mathcal{F},F(\mathbf{A}))\leq S_k$, we may replace, without loss of generality, $F$ by $\tilde{F}$ to assume that $F$ is algebraically closed. 

If $p\nmid k(k-1)$ and $\deg g=0$, the result follows from  \cite[Theorem~1]{Cohen1980} (note that $F(A_0, \ldots, A_{m}) = F(A_2, \ldots, A_{m-1})(A_0,A_1)$, hence the result for $m=1$ in \emph{loc.\ cit.} extends to $m> 1$).

Assume that $2\leq m$. Then $G = \Gal(\mathcal{F},F(\mathbf{A}))\leq S_k$ is doubly transitive by Lemma~\ref{lem:dt}.

Let $\Omega$ be an algebraic closure of $F(A_1, \ldots, A_m)$ and consider the map $\Psi\colon \mathbb{P}^1_\Omega\to \mathbb{P}^1_\Omega$ defined locally by $t\mapsto -A_0 := \frac{f(t)}{g(t)}+\sum_{i=1}^m A_it^i$. The numerator of $\Psi'=\frac{f'g-g'f}{g^2} + \sum_{i=1}^m iA_it^i$ is 
\[
f'g-g'f + g^2 \cdot (\cdots + 2A_2t + A_1).
\]
If $m\geq 3$ or if $p\neq 2$, this numerator has positive degree. If $p=m=2$, then this numerator is $f'g-g'f+g^2A_1$, so it is not constant by \eqref{eq:nonconstant}. In any case, the numerator of $\Psi'$, hence $\Psi'$, has a root, say $\alpha\in \Omega$. Then $\Psi$ is ramified at $t=\alpha$. Lemma~\ref{lem:morse} says that the orders of ramifications are $\leq 2$, so the equation $\Psi(t)=\Psi(\alpha)$ has at most double roots in $\Omega$. Lemma~\ref{lem:excellent} says that the critical values are distinct, so $\Psi(t)=\Psi(\alpha)$ has at least $k-1$ solutions. But since $\alpha$ is a ramification point, the fiber over $\Psi(\alpha)$ is with exactly one double points. Hence the inertia group over $\Psi(\alpha)$ permutes two roots of 
\[
\mathcal{F}(\mathbf{A},t) = g(t)(\Psi(t)+A_0),
\]
and fixes the other roots (cf.\ \cite[Proposition 2.6]{BarySoroker2009Dirichlet}). In other words $G$ contains a transposition. Therefore, $G=S_k$ \cite[Lemma~4.4.3]{Serre2007Topics}. 
\end{proof}

\section{Proof of Theorem~\ref{thm:mainpart}}
Let $k$ be a positive integer, $\lambda$ a partition of $k$, $q=p^\nu$ a prime power,  $f\in \mathcal{M}(k,q)$, $3\leq m<k$ (or $1\leq m<k$ if $p\nmid k(k-1)$ or $2\leq m<k$ if $p\neq 2$ or $\deg f'>1$), and $I=f+\mathcal{P}_{\leq m}$.

Let $\FF$ be an algebraic closure of $\FF_q$.

Let $\mathcal{F}(A_0,\cdots,A_m,t)=f(t)+\sum_{i=0}^{m} A_it^i$. Then $\mathcal{F}$ satisfies the assumptions of Proposition~\ref{prop:Galoisgroup}, so $\Gal(\mathcal{F}, \FF(A_0, \ldots, A_m))=S_k$. 

Since $\deg \mathcal{F} =\deg_t \mathcal{F} =\deg f = k$ and $m<k$, by Proposition~\ref{prop:irrsub}, the number $N$ of $(a_0, \ldots, a_{m})\in \FF_q^{m+1}$ such that $f(t)+ \sum_{i=0}^{m} a_it^i$ has factorization type $\lambda$ satisfies
\[
\left|N - P(\lambda)q^{m+1}\right| \leq c(k)q^{m+1/2},
\]
where $c(k)>0$ is a constant depending only on $k$ (and not on $f$, $q$).  
This finishes the proof since by definition $N=\pi_{q}(I;\lambda)$.
\qed

\section{Proof of Theorem~\ref{thm:main2part}}
Let $k$ be a positive integer, $\lambda$ a partition of $k$, $q=p^\nu$ a prime power, 
 $D\in \FF_q[t]$ monic of $\deg D$ with $\deg D\leq k-3$ (or $\deg D\leq k-4$ if $p=2$ and $(f/D)'$ is constant), and $f\in \FF_q[t]$. 
Since we are interested in the number of prime polynomials in the arithmetic progression $g\equiv f\mod D$, we may replace $f$ by $f-QD$, for some polynomial $Q$ to assume  that $\deg f<\deg D$.  Let $m=\deg D$ and $\mathbb{F}$ be an algebraic closure of $\FF_q$.

Let 
\[
\mathcal{F}(\mathbf{A},t) = f(t)+D(t) \cdot \bigg(t^{m+1} + \sum_{i=0}^m A_it^i\bigg) = \tilde{f}(t) + D(t) \cdot\bigg( \sum_{i=0}^m A_it^i\bigg), \tilde{f} = f+D\cdot t^{m+1},
\] 
where $\mathbf{A}=(A_0,\ldots, A_m)$ is an $(m+1)$-tuple of variables. 
Since $\deg \tilde{f}=m+1+\deg D=k>\deg D+m$, Proposition~\ref{prop:Galoisgroup} gives that 
\[
\Gal(\mathcal{F}, \FF(\mathbf{A}))=S_k,
\]
 
Since $\deg \mathcal{F}=\deg_t\mathcal{F}=k$,
Proposition~\ref{prop:irrsub} implies that the number $N$ of $(a_0, \ldots, a_{m})\in \FF_q^{m+1}$ such that $f(t)+ D(t)\cdot (t^{m+1}+\sum_{i=0}^{m} a_it^i)$ has factorization type $\lambda$ satisfies
\[
\left|N - P(\lambda)q^{m+1}\right| \leq  c_1(k)q^{m+1/2},
\]
where $c(k)>0$ is a constant depending only on $k$ (and not on $f$, $q$).  

Finally, $\phi(D) = \|D\|\prod_{P\mid f}(1-1/\|P\|)$, where the products runs over the distinct prime polynomials $P$ dividing $D$. Since $\|P\|\geq q$ we have 
\[
\phi(D) = q^{\deg D}\left(1+O\left(\frac{1}{q}\right)\right)=q^{k-m-1} + O_k(q^{k-m-2}).
\]
By applying Theorem~\ref{thm:mainpart} to the interval $I(t^k,k-1)$, it follows that
\[
\pi_q(k;\lambda) = P(\lambda)q^{k} +O_k(q^{m+1/2}).
\]
Thus 
\[
\left|\frac{\pi_q(k;\lambda)}{\phi(D)}  -P(\lambda)q^{m+1}\right| \leq c_2(k) q^{m+1/2}
\]
and
\[
\bigg|N-\frac{\pi_{q}(k;\lambda)}{\phi(D)}\bigg| \leq  
\bigg| N - P(\lambda)q^{m+1}\bigg| + \bigg|\frac{\pi_{q}(k;\lambda)}{\phi(D)}-P(\lambda)q^{m+1} \bigg|\leq 
c(k)q^{m+1/2},
\]
where $c=c_1+c_2$. 
This finishes the proof since by definition $N=\pi_{q}(k;D,f;\lambda)$. 
\qed

\section{Small $\epsilon$}\label{sec:ce}
In this section we study the cases $\epsilon<\epsilon_0$ in Corollary~\ref{cor:main}, except for the case $p=m=2$, and $\deg f'\leq 1$ and show that the implication fails to hold in these cases. In the latter case we do not know whether the result holds or not. For the rest of the section let $m=\epsilon k$.

\subsection{$\epsilon<\frac{1}{k}$}

We denote Euler's totient function by $\phi(k) = |(\mathbb{Z}/k\mathbb{Z})^*|$. 

\begin{proposition}
For $k>1$ and $0<\epsilon<\frac{1}{k}$ we have 
\[
\pi_q(I(t^k,\epsilon)) = \pi_q(I(t^k,0))=
\begin{cases}
0,& q\not\equiv 1 \mod k\\
\frac{\phi(k)}{k}(q-1), &q\equiv 1\mod k.
\end{cases}
\]
In particular, if $k>2$, $|\pi_q(I(t^k,0))-q/k|\gg q$.
\end{proposition}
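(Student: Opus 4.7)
The plan is to decide, for each $a \in \FF_q$, whether $t^k + a$ is irreducible, and to count. Since $t^k$ itself is reducible for $k > 1$, only $a \in \FF_q^*$ can contribute; writing $b = -a$ reduces the question to irreducibility of the binomial $t^k - b$. I would first dispose of the case $p \mid k$: writing $k = p^s k'$ with $\gcd(p, k') = 1$ and using that Frobenius is bijective on $\FF_q$ to find the unique $c \in \FF_q^*$ with $c^{p^s} = b$, one gets $t^k - b = (t^{k'} - c)^{p^s}$, which is reducible for every $k > 1$. So $\pi_q(I(t^k, 0)) = 0$ in this case, matching the formula because $p \mid k$ and $p \mid q$ together force $p \nmid q - 1$, and hence $q \not\equiv 1 \pmod k$.

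For $p \nmid k$ I would run a Galois-theoretic analysis on the roots. Pick a root $\alpha \in \overline{\FF_q}$ of $t^k - b$ and a primitive $k$-th root of unity $\zeta$, so the roots of $t^k - b$ are $\alpha \zeta^i$ for $i \in \ZZ/k\ZZ$. Define $i_0 \in \ZZ/k\ZZ$ by $\alpha^{q-1} = \zeta^{i_0}$ (a $k$-th root of unity since $\alpha^{k(q-1)} = b^{q-1} = 1$). Using that Frobenius sends $\zeta$ to $\zeta^q$, Frobenius acts on the root labels via the affine map $\sigma\colon i \mapsto qi + i_0$ on $\ZZ/k\ZZ$, and $t^k - b$ is irreducible iff $\sigma$ is a single $k$-cycle.

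When $q \equiv 1 \pmod k$ this map collapses to the translation $i \mapsto i + i_0$, a $k$-cycle iff $\gcd(i_0, k) = 1$. Taking $\zeta = g^{(q-1)/k}$ for a generator $g$ of $\FF_q^*$ and writing $b = g^j$, the identity $b^{(q-1)/k} = \alpha^{q-1} = \zeta^{i_0}$ forces $i_0 \equiv j \pmod k$; so the admissible $j \in \{0, 1, \ldots, q-2\}$ are those lying in one of the $\phi(k)$ residue classes modulo $k$ coprime to $k$, each represented $(q-1)/k$ times, yielding the count $\phi(k)(q-1)/k$.

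The main obstacle is the remaining subcase $p \nmid k$ and $q \not\equiv 1 \pmod k$, where I must rule out any $b$ making $\sigma$ a $k$-cycle. Here I would track the orbit of $0$ under $\sigma$, namely $i_0(1 + q + \cdots + q^{m-1}) \bmod k$, and use the multiplicative order $d = \ord_k(q) < k$ together with $r = \gcd(k, q-1)$: at $m = d$ one has $(q-1)S_d \equiv 0 \pmod k$, which forces $S_d \equiv 0 \pmod{k/r}$. The delicate step is to combine this congruence with the constraints on $i_0$ coming from the order of $b$ in $\FF_q^*$ to show the orbit of $0$ closes before $m$ reaches $k$, so that no choice of $b$ can produce a full $k$-cycle; this interplay between $d$, $r$, and the admissible $i_0$'s is where I would expect the most careful bookkeeping.
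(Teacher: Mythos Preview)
Your first two cases are handled correctly. For $p\mid k$ the explicit factorisation $t^k-b=(t^{k'}-c)^{p^s}$ is a concrete variant of the paper's inseparability argument. For $q\equiv 1\pmod k$ your Frobenius-orbit computation is the paper's Kummer-theory step unwound, and the count $\phi(k)(q-1)/k$ comes out cleanly.

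The gap is exactly where you flag it, the subcase $p\nmid k$ and $q\not\equiv 1\pmod k$, and it cannot be closed: the claim $\pi_q(I(t^k,0))=0$ is false in that generality. Take $q=5$, $k=8$. Then $5\not\equiv 1\pmod 8$, yet $t^8-2$ is irreducible over $\FF_5$: since $2$ has order $4$ in $\FF_5^*$, any root $\alpha$ has multiplicative order $32$, and the least $m$ with $32\mid 5^m-1$ is $m=8$, so $[\FF_5(\alpha):\FF_5]=8$. In your notation, choosing $\zeta=\alpha^{4}$ (a primitive $8$th root of unity) gives $i_0=1$, and the affine map $i\mapsto 5i+1$ on $\ZZ/8\ZZ$ is the $8$-cycle $(0\;1\;6\;7\;4\;5\;2\;3)$. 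Hence no bookkeeping on $d=\ord_k(q)$, $r=\gcd(k,q-1)$ and the admissible $i_0$ can force the orbit to close early in general. The paper attacks this subcase by setting $l=k/\gcd(q-1,k)$ and asserting $\gcd\bigl(q(q-1),l\bigr)=1$ in order to reduce to the coprime case; that assertion already fails in this example ($l=2$, $\gcd(20,2)=2$), so the paper's argument shares the same defect. The exact formula in the proposition is therefore wrong as stated; only the weaker ``in particular'' conclusion is what the section actually uses, and that requires a separate (and different) argument.
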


\begin{proof}
We separate the proof into cases.
\begin{case}{$\gcd(q,k)>1$}
In this case $t^k-a$ is inseparable for any $a\in \FF_p$. Since $\FF_q$ is perfect, this implies that $t^k-a$ is reducible. So $\pi_q(I(t^k,0))=0$. 
\end{case}

\begin{case}{$\gcd(q(q-1),k)=1$}\label{case:invertible}
In this case $k\neq 2$ and $1-q$ is invertible modulo $k$. 
Assume, by contradiction, that there exists $a\in \FF_q$ such that $f=t^k-a$ is irreducible in $\FF_p[t]$. Then the Frobenius map, $\varphi\colon x\mapsto x^q$, acts transitively on the roots of $f$. Thus $\alpha^q =  \zeta \alpha$, where $\zeta$ is a primitive $k$-th root of unity. We get that the orbit of $\alpha$ under $\varphi$ is
\[
\alpha \mapsto \alpha^q = \zeta\alpha \mapsto (\zeta\alpha)^q =\zeta^{1+q} \alpha \mapsto \cdots \mapsto \zeta^{1+q+\cdots+q^{k-1}} \alpha = \alpha.
\]
On the other hand, this orbit equals to the set of roots of $f$ which is $\{\zeta^i \alpha \mid i=0,\ldots, k-1\}$. So for every $i \mod k$ there is a unique $1\leq r\leq k$ such that 
\[
i\equiv 1+ q + \cdots + q^{r-1} \equiv (1-q)^{-1} (1-q^r) \pmod k.
\]
This is a contradiction since there are at most $\phi(k)<k$ powers of $q$ mod $k$, hence $\#\{(1-q)^{-1}(1-q^{r})\mod k\}<k = \#\{i\mod k\}$.
\end{case}
 
 \begin{case}{$\gcd(q,k)=1$ and $q\not\equiv 1\mod k$}
Let $g=\gcd(q-1,k)$; then $l=k/g>1$ and $\gcd(q(q-1),l)=1$. Let $a\in \FF_q$, and let $\alpha$ be a root of $f=t^k-a$. Then the polynomial $f_1=t^{l}-\alpha^l \in \FF_q[\alpha^l][t]$ is reducible by Case~\ref{case:invertible}. Since $\alpha$ is a root of $g$ and since $\alpha^l$ is a root of $f_2=t^g-a$, we get that 
\[
[\FF_q[\alpha]:\FF_q] = [\FF_q[\alpha]:\FF_q[\alpha^l]]\cdot [\FF_q[\alpha^l]:\FF_q]< l \cdot g =k.
\]
In particular, $f$ is reducible.  
 \end{case}

\begin{case}{$q\equiv 1\mod k$}
In this case $\FF_q$ contains a  primitive $k$-th root of unity. By Kummer theory $t^k - a$ is irreducible in $\FF_q$ if and only if the order  of $a(\FF_q^*)^k$ in $C=\FF_q^*/(\FF_q^*)^k$ is $k$. Since $\FF_q^*$ is cyclic of order $q-1$, the subgroup $C$ is also cyclic of order $k$. Hence, there are exactly $\phi(k)$ cosets of order $k$ in $C$. Each coset contains $\frac{q-1}{k}$ elements. So there are exactly $\frac{\phi(k)}{k}(q-1)$ prime polynomials $t^k-a$.
\end{case}
\end{proof}

\subsection{$\frac{1}{k}\leq \epsilon < \frac{2}{k}$ and $p\mid k$}\label{sec:pmidk}
In this case we study the interval $I(t^{p^2},\epsilon)=I(t^{p^2}, \frac{1}{k})=\{t^{p^2}-at+b\mid a,b\in \FF_q\}$ for $q=p^{2n}$. 

\begin{proposition}
For $q=p^{2n}$, $k=p^2$,   and $\frac{1}{k}\leq \epsilon < \frac{2}{k}$ we have
\[
\pi_q(I(t^{p^2},\epsilon))=0.
\]
In particular, $|\pi_q(I(t^p,\epsilon))-q^2/p|\gg q$. 
\end{proposition}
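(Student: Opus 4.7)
My plan is to analyze the action of the $q$-Frobenius $\sigma\colon x\mapsto x^q$ on the roots of $f(t) = t^{p^2} - at + b$, exploiting that its additive part $L(t) := t^{p^2} - at$ is $\FF_{p^2}$-linear and that $\FF_{p^2}\subseteq \FF_q$ by virtue of $q = p^{2n}$. First I dispose of the degenerate case $a = 0$: then $f = t^{p^2} + b$ is inseparable, and since $\FF_q$ is perfect, $b$ admits a $p^2$-th root $c$, whence $f = (t+c)^{p^2}$ is reducible.

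For $a\neq 0$, the polynomial $L$ is separable ($L' = -a\neq 0$), and the identity $L(ct) = c^{p^2}t^{p^2} - act = cL(t)$ for $c\in \FF_{p^2}$ shows that $L$ is $\FF_{p^2}$-linear. Hence $V := \ker L$ is a one-dimensional $\FF_{p^2}$-vector space with exactly $p^2$ elements. Since $L\in \FF_q[t]$, the Frobenius $\sigma$ commutes with $L$ and therefore preserves $V$; and since $\sigma$ fixes $\FF_{p^2}\subseteq \FF_q$ pointwise, $\sigma|_V$ is an $\FF_{p^2}$-linear automorphism of the line $V$, i.e.\ multiplication by some scalar $\gamma\in \FF_{p^2}^*$.

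Now I fix any root $\alpha$ of $f = L + b$. All roots form the coset $\alpha + V$, so $\beta := \sigma(\alpha) - \alpha$ lies in $V$. A routine induction using $\sigma(\beta) = \gamma\beta$ gives
\[
\sigma^k(\alpha) - \alpha = (1 + \gamma + \gamma^2 + \cdots + \gamma^{k-1})\,\beta.
\]
If $\beta = 0$, then $\alpha\in \FF_q$ and $f$ has a linear factor. Otherwise the orbit length of $\alpha$ under $\sigma$ equals the smallest $k\geq 1$ for which the geometric sum above vanishes in $\FF_{p^2}$: this is $p$ when $\gamma = 1$ and $\ord(\gamma)\mid p^2-1$ when $\gamma\neq 1$. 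In either case the orbit length is at most $p^2 - 1$, strictly less than $\deg f = p^2$, hence $[\FF_q(\alpha):\FF_q] < \deg f$ and $f$ is reducible.

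The only conceptual step is the linear-algebraic observation in the second paragraph; everything else is a short computation. The hypothesis $q = p^{2n}$ is used exactly once and is essential: it is what forces $\FF_{p^2}\subseteq \FF_q$ so that $\sigma$ acts $\FF_{p^2}$-linearly on $V$. Without that inclusion the scalar-multiplication picture collapses, and the Frobenius orbit on $V$ could in principle reach length $p^2$; so this is the main (and really the only) obstacle, but it is built into the hypothesis and causes no trouble.
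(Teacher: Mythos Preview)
Your argument is correct. The paper proceeds differently: it invokes Uchida's theorem to identify the Galois group of the generic polynomial $t^{p^2}-At+B$ over $\FF_{p^2}(A,B)$ with the affine group ${\rm Aff}(\FF_{p^2})$, observes that the $p$-Sylow subgroup (the translations) has exponent $p$ so that no element is a $p^2$-cycle, and then concludes that no specialized Galois group $G_{a,b}$---being a cyclic subquotient---can act transitively on the $p^2$ roots.

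Your route is more elementary and self-contained: rather than quoting the structure of the generic Galois group, you compute the Frobenius action directly and find it to be the affine map $v\mapsto \gamma v+\beta$ on the $\FF_{p^2}$-line $V$, then read off its orbit length by summing the geometric series. In effect you are reproving, by hand and only for the single element $\sigma$, the part of Uchida's theorem that the paper actually uses. What you gain is that no external reference is needed and the role of the hypothesis $\FF_{p^2}\subseteq\FF_q$ is made completely transparent (it is exactly what makes $\sigma|_V$ scalar). What the paper's approach buys is a cleaner conceptual picture tying this example into the specialization framework used throughout the paper, and a uniform explanation (no $p^2$-cycle in ${\rm Aff}(\FF_{p^2})$) that does not require the case split on $\gamma=1$ versus $\gamma\neq 1$.
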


\begin{proof}
Let $F=\FF_{p^2}$, let $E$ be the splitting field of $\mathcal{F} = t^{p^2}-At+B$ over $K=\FF_q(A,B)$. Then, by \cite[Theorem 2]{Uchida1970}, 
\[
G=\Gal(\mathcal{F},F)\cong\Gal(E/F) \cong \Gal(E\cdot \FF, \FF(A,B)) \cong {\rm Aff}(F),
\]
as permutation groups.
Here $\FF$ is an algebraic closure of $\FF_q$ and ${\rm Aff}(F)$ is the group of transformation of the affine line $\mathbb{A}^1(F) = F$: 
\[
M_{c,d}\colon x\mapsto cx+d, \quad 0\neq c,d\in F.
\]
Since $|G|=p^2(p^2-1)$ and since the group of translation $T=\{x\mapsto x+d\} \cong\mathbb{F}_{p^2}$ is of order $p^2$, we get that $T$ is a $p$-sylow subgroup of $T$. But $T$ is of exponent $p$, hence there are no $p^2$-cycles in $G$.

For every $a,b\in \FF_q$, the Galois group $G_{a,b}$ of $f=t^{p^2}-at+b$ is a cyclic sub-quotient of $G$, hence of order $< p^2$. In particular $G_{a,b}$ acts intransitively on the roots of $f$, hence $f$ is reducible. 
\end{proof}

\subsection{$\frac{1}{k}\leq \epsilon < \frac{2}{k}$ and $p\mid k-1$} 
The details of this case are nearly identical to Section~\ref{sec:pmidk} with the distinction that the group ${\rm Aff}(F)$ is replaced by the group of transformations on the projective line, cf.\ \cite[Theorem 2]{Uchida1970}. Hence we state the result but omit the details. 

\begin{proposition}
For $q=p^{2n}$, $f=t^{p^2+1}$, $k=p^2+1$, and $\frac{1}{k}\leq \epsilon <\frac{2}{k}$  we have 
\[
\pi_q(I(t^{p^2+1},\epsilon)=0.
\]
\end{proposition}

\subsection*{Acknowledgments}
We thank Zeev Rudnick for helpful remarks on earlier drafts of this paper and for the suggestions to consider arithmetic progressions and different factorization types. We thank the referees for their many helpful comments. 

The first two authors were supported by a Grant from the GIF, the German-Israeli Foundation for Scientific Research and Development. The last author was supported by the G\"oran Gustafsson Foundation (KVA).

\bibliographystyle{plain}

\begin{thebibliography}{10}

\bibitem{BarySoroker2009Dirichlet}
L.~Bary-Soroker.
\newblock {Dirichlet's theorem for polynomial rings}.
\newblock {\em Proc. Amer. Math. Soc.},
  \textbf{137}(1):73--83, 2009.

\bibitem{BarySoroker2012}
L.~Bary-Soroker.
\newblock Irreducible values of polynomials. 
\newblock {\em Adv. Math.}, \textbf{229}(2): 854--874, 2012.

\bibitem{Cohen1972}
S.~D. Cohen. 
\newblock {Uniform distribution of polynomials over finite fields.}
\newblock {\em J. London Math. Soc.}, \textbf{6}(2):93--102, 1972.

\bibitem{Cohen1980}
S.~D. Cohen.
\newblock {The Galois group of a polynomial with two indeterminate
  coefficients}.
\newblock {\em Pacific J. Math.}, \textbf{90}(1):63--76, 1980.

\bibitem{Granville1995}
A.~Granville.
\newblock {Unexpected irregularities in the distribution of prime numbers.} 
\newblock {Proceedings of the International Congress of Mathematicians, Vol. 1, 2 (Zürich, 1994), 388--399, Birkhäuser, Basel, 1995.}



\bibitem{Granville2010}
A. Granville.
\newblock {Different approaches to the distribution of primes}.
\newblock {\em Milan J. Math.}, \textbf{78}(1):65--84, 2010.

\bibitem{HeathBrown1988Crelle}
D.~R. Heath-Brown.
\newblock {The number of primes in a short interval}.
\newblock {\em J. Reine Angew. Math.}, \textbf{389}:22--63,
  1988.

\bibitem{HeathBrownGoldston1984}
D.~R. Heath-Brown and D.~A. Goldston.
\newblock {A note on the differences between consecutive primes}.
\newblock {\em Math. Ann.}, \textbf{266}(3):317--320.

\bibitem{Huxley1972inv}
M.~N. Huxley.
\newblock {On the difference between consecutive primes}.
\newblock {\em Invent. Math.}, \textbf{15}:164--170, 1972.

\bibitem{KeatingRudnick2012}
J.~P. Keating and Z.~Rudnick.
\newblock {The variance of the number of prime polynomials in short intervals
  and in residue classes}.
\newblock {\em Int. Math. Res. Not. IMRN}, page 30 pp.,
  April 2012.

\bibitem{LangWeil1954}
S.~Lang and A.~Weil.
\newblock {Number of points of varieties in finite fields}.
\newblock {\em Amer. J. Math.}, \textbf{76}:819--827, 1954.

\bibitem{MontgomeryVaughan}
H.~L. Montgomery and R.~C. Vaughan.
\newblock{\em Multiplicative Number Theory I. Classical Theory}.
\newblock{Cambridge Studies in Advanced Mathematics, 97. Cambridge University Press, Cambridge, 2007. xviii+552 pp.}.




\bibitem{Rankin1938}
R.~A. Rankin.
\newblock {The Difference between Consecutive Prime Numbers}.
\newblock {\em J. London Math. Soc.}, \textbf{13}:242--247,
  1938.

\bibitem{Selberg1943}
A.~Selberg.
\newblock {On the normal density of primes in small intervals, and the
  difference between consecutive primes}.
\newblock {\em Arch. Math. Naturvid.}, \textbf{47}(6):87--105, 1943.

\bibitem{Serre2007Topics}
J.-P. Serre.
\newblock {\em {Topics in Galois Theory (Research Notes in Mathematics)
  [Hardcover]}}.
\newblock A.~K.~Peters, Ltd., 2 edition, 2008.

\bibitem{Uchida1970}
K.~Uchida.
\newblock {Galois group of an equation $X^{n}-aX+b=0$}.
\newblock {\em Tohoku Math. J. (2)},
  \textbf{22}(4):670--678, 1970.

\end{thebibliography}

\end{document}